\documentclass[12pt]{article}
\usepackage{amstext}
\usepackage{amsthm}
\usepackage{amssymb}
\usepackage{ucs}
\usepackage{comment}
\usepackage{amsthm}
\usepackage{amsmath}
\usepackage{latexsym}
\usepackage[cp1251]{inputenc}
\usepackage[english]{babel}
\usepackage{graphicx}
\usepackage{wrapfig}
\usepackage{caption}
\usepackage{subcaption}
\usepackage{txfonts}
\usepackage{lmodern}
\usepackage{mathrsfs}
\usepackage{algorithm}
\usepackage{multicol}
\usepackage{enumerate}
\usepackage{titlesec}
\usepackage{paralist}
\usepackage{mathtools}
\usepackage{tikz}
\usetikzlibrary{shapes,snakes}
\setcounter{secnumdepth}{4}

\titleformat{\paragraph}
{\normalfont\normalsize\bfseries}{\theparagraph}{1em}{}
\titlespacing*{\paragraph}
{0pt}{3.25ex plus 1ex minus .2ex}{1.5ex plus .2ex}

\newcommand*{\myproofname}{Proof}

\usepackage{wasysym}

\usepackage{fullpage}

\title{Packing chromatic number of subcubic graphs}
\date{\today}
\author{
J\' ozsef Balogh~\thanks{Department of Mathematics, University of Illinois at Urbana--Champaign, IL, USA, jobal@illinois.edu. 
Research of this author is partially supported by NSF Grant DMS-1500121, Arnold O. Beckman
Research Award (UIUC Campus Research Board 15006) and by the Langan Scholar Fund
(UIUC).}
\and Alexandr Kostochka \thanks{Department of Mathematics, University of Illinois at Urbana--Champaign, IL, USA and
Sobolev Institute of Mathematics, Novosibirsk 630090, Russia, kostochk@math.uiuc.edu. Research of this author is supported in part by NSF grant
 DMS-1266016 and by grants 15-01-05867  and 16-01-00499 of the Russian Foundation for Basic Research.}
 \and Xujun Liu\thanks{Department of Mathematics, University of Illinois at Urbana--Champaign, IL, USA,  xliu150@illinois.edu.}
 }

\makeatletter
\newcommand{\neutralize}[1]{\expandafter\let\csname c@#1\endcsname\count@}
\makeatother

\newtheorem{theo}{Theorem}

\newtheorem{lemma}[theo]{Lemma}

\newtheorem{corl}[theo]{Corollary}

\theoremstyle{definition}
\newtheorem{defn}[theo]{Definition}

\theoremstyle{remark}

	\setlength{\columnseprule}{1pt}

	\def\quotient#1#2{%
		\raise1ex\hbox{$#1$}\Big/\lower1ex\hbox{$#2$}%
	}

	\renewcommand{\epsilon}{\varepsilon}

\begin{document}
	\maketitle
	
\begin{abstract}
A {\em packing $k$-coloring} of a graph $G$ is a partition of $V(G)$ into sets $V_1,\ldots,V_k$ such that for each $1\leq i\leq k$
the distance between any two distinct $x,y\in V_i$ is at least $i+1$. The {\em packing chromatic number}, $\chi_p(G)$, of a graph $G$ is the minimum $k$ such that $G$ has a packing $k$-coloring. Sloper showed that there are   $4$-regular graphs with arbitrarily large
packing chromatic number. The question whether the packing chromatic number of subcubic graphs is bounded appears in several papers.
We answer this question in the negative. Moreover, we show that for every fixed $k$ and $g\geq 2k+2$,  almost
every $n$-vertex cubic graph of girth at least $g$ has the packing chromatic number greater than $k$.
\\
\\
 {\small{\em Mathematics Subject Classification}: 05C15, 05C35.}\\
 {\small{\em Key words and phrases}:  packing coloring, cubic graphs, independent sets.}
\end{abstract}

	\section{Introduction}	
For a positive integer $i$,  a set $S$ of vertices in a graph $G$ is {\em $\; i$-independent } if the distance in $G$ between any two
distinct vertices of $S$ is at least $i+1$. In particular, a $1$-independent set is simply an independent set.

A {\em packing $k$-coloring} of a graph $G$ is a partition of $V(G)$ into sets $V_1,\ldots,V_k$ such that for each $1\leq i\leq k$,
the set $V_i$ is $i$-independent. The {\em packing chromatic number}, $\chi_p(G)$, of a graph $G$, is the minimum $k$ such that $G$ has a packing $k$-coloring.
 The of  notion packing $k$-coloring was introduced in 2008 by
Goddard, S.M. Hedetniemi, S.T.Hedetniemi,  Harris and  Rall~\cite{GHHHR}
(under the name {\em broadcast coloring}) motivated by frequency assignment problems in broadcast networks.
The concept has attracted a considerable attention recently: there are more than 25 papers on the topic 
(see e.g.~\cite{ANT1,BKR1,BKR2,BKRW1,BKRW2,CJ1, FG1,FKL1,G1,GT1,S1} and references in them). In particular, 
Fiala and Golovach~\cite{FG1} proved
that finding the  packing chromatic number of a graph 
is NP-hard even in the class of trees. Sloper~\cite{S1} showed that there are graphs with maximum degree $4$ and arbitrarily large
packing chromatic number. 

The question whether the packing chromatic number of all {\em subcubic} graphs (i.e., the graphs with maximum degree at most $3$) 
is bounded by a constant was not resolved. For example, Bre\v sar, Klav\v zar, Rall, and Wash~\cite{BKRW2} write:
{\em `One of the intriguing problems related to the packing chromatic number is whether it is bounded by a constant in the class of all cubic graphs'.} It was proved in~\cite{BKRW2,LBS1,S1} that it is indeed bounded in some subclasses of subcubic graphs. On the other hand,
 Gastineau and Togni~\cite{GT1} constructed a cubic graph $G$ with $\chi_p(G)=13$, and asked whether
there are cubic graphs with a larger packing chromatic number. Bre\v sar, Klav\v zar, Rall, and Wash~\cite{BKRW1} answered this question in affirmative by
constructing a cubic graph $G'$ with $\chi_p(G')=14$. The main result of this paper answers the question in full:
Indeed there are cubic graphs with arbitrarily large packing chromatic number. Moreover, we prove that `many' cubic graphs have
`high' packing chromatic number:

\begin{theo}\label{e1}
For each fixed integer $k\geq 12$ and $g\geq 2k+2$, 
 almost
every $n$-vertex cubic graph of girth at least $g$ has the packing chromatic number greater than $k$.
\end{theo}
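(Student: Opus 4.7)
The plan is to bound the expected number of packing $k$-colorings of a random cubic graph of girth at least $g$ by the first moment method and then apply Markov's inequality. I would work in the configuration model for random $3$-regular graphs with girth conditioning, using the standard contiguity result that such conditioning changes probabilities only by a bounded factor.

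The first (deterministic) ingredient is a ball-packing bound: because $g \geq 2k+2$, the ball $B_r(v)$ for any $r \leq \lfloor k/2\rfloor$ is isomorphic to the ball of radius $r$ in the $3$-regular tree, with $D_r := 3\cdot 2^r - 2$ vertices. Thus, whenever $V_i$ is $i$-independent with $i \geq 2$, the balls $B_{\lfloor i/2\rfloor}(v)$ for $v \in V_i$ are pairwise disjoint, yielding $|V_i| \leq n/D_{\lfloor i/2\rfloor}$. Summing and using $\sum_i|V_i| = n$ forces $|V_1| \geq (1-\sum_{i=2}^k D_{\lfloor i/2\rfloor}^{-1})n$, a positive density once $k\geq 12$. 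Next, by symmetry over vertex labels the expected number of packing $k$-colorings equals
\[
\sum_{s_1+\cdots+s_k=n}\binom{n}{s_1,\ldots,s_k}\,p(s_1,\ldots,s_k),
\]
where $p(s_1,\ldots,s_k)$ is the configuration-model probability that a fixed ordered partition with those sizes is a packing $k$-coloring. For each $i$, this probability factor should decay like $\exp\!\bigl(-\Theta(s_i^2 D_{\lfloor i/2\rfloor}/n)\bigr)$, reflecting the expected number of ordered pairs inside $V_i$ at distance at most $i$. The goal is to show that the total exponential decay exceeds the multinomial entropy uniformly over feasible size profiles $(s_1,\ldots,s_k)$ satisfying the ball-packing constraints from the previous paragraph.

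The hardest part will be handling the dependence between the events ``$V_i$ is $i$-independent'' for different $i$: they all concern the same random graph, so one cannot naively multiply probabilities. A careful argument is required, for instance sequentially conditioning on $i$-independence of $V_i$ for $i = 1, 2, \ldots$ and tracking how each conditioning perturbs the remaining partial pairings in the configuration model, or applying a Janson-type inequality adapted to this setting, in order to establish the product-like upper bound on $p(s_1,\ldots,s_k)$. The threshold $k = 12$ in the statement would appear precisely as the smallest value at which, after minimizing over feasible $(s_1,\ldots,s_k)$, the total decay rate first overcomes the multinomial entropy, driving the expected count to $o(1)$ and yielding the theorem via Markov's inequality.
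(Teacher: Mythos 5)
Your overall strategy---a joint first-moment bound on the expected number of packing $k$-colorings in the configuration model, transferred to girth-conditioned graphs---is genuinely different from the paper's, and the parts you actually carry out (the ball-packing bound $|V_i|\le n/(3\cdot 2^{\lfloor i/2\rfloor}-2)$ and the transfer from pairings to graphs of girth at least $g$) are correct and correspond to Lemma~\ref{trivial bounds}(i) and Corollary~\ref{MK2}. The genuine gap sits exactly where you say ``the hardest part will be'': you never establish the claimed product-like upper bound on $p(s_1,\ldots,s_k)$, and you never verify that the resulting decay beats the multinomial entropy uniformly over feasible size profiles. Those two steps \emph{are} the proof; neither ``sequential conditioning'' nor ``a Janson-type inequality'' is worked out even in outline. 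Moreover, the decay rate you posit, $\exp(-\Theta(s_i^2 D_{\lfloor i/2\rfloor}/n))$, is too weak on its face: the ball-packing constraint already forces $s_i\le n/D_{\lfloor i/2\rfloor}$, so your exponent is $O(s_i)$, whereas the entropy cost of carving out a part of size $s_i=x_in$ is of order $s_i\ln(1/x_i)$, larger by a factor of roughly $\ln D_{\lfloor i/2\rfloor}$. (The count of forbidden pairs for $i$-independence involves balls of radius $i$, not $\lfloor i/2\rfloor$; with that correction the heuristic is not obviously doomed, but it remains a heuristic.) Finally, the assertion that $k=12$ ``would appear precisely'' as the threshold of your optimization is speculation---you have not evaluated the optimization at any point.

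It is worth knowing why the paper does not take your route. The authors bound each $c_i(G)$ separately by a first-moment count over pairings (Lemmas~\ref{C_2k} and~\ref{C_2k+1}, yielding the thresholds $b_2=0.236$, $b_3=0.1394$, $b_4=0.082,\ldots$ of Corollaries~\ref{coreven} and~\ref{corodd}, together with McKay's bound $c_1\le 0.45537n$), and they note explicitly that they could not show $c_1+\cdots+c_k<n$; indeed the sum of their individual thresholds exceeds $n$, so treating the color classes independently provably fails at this level of precision. Their fix is a purely deterministic interaction lemma (Lemma~\ref{bigl}): in any cubic graph of girth at least $9$ with $c_1<0.456n$, disjoint sets $C_1,C_2,C_4$ that are respectively $1$-, $2$-, $4$-independent satisfy $|C_1\cup C_2\cup C_4|\le 0.7174n$, strictly below $b_1+b_2+b_4$. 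That lemma, proved by counting the edges between $C_1\cup C_2$ and its complement and analyzing the trees $B_G(a,2)$ for $a\in C_4$, is the idea your proposal has no analogue of. A joint first moment could in principle capture such interactions automatically, since the multinomial coefficient couples the parts; but until you prove the bound on $p(s_1,\ldots,s_k)$ and actually carry out the minimization over profiles, there is no evidence that it does, and the paper's experience suggests the margin is thin.
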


The theorem will be proved in the language of the so-called {\em Configuration model}, $\mathcal{F}_3(n)$. 
We will discuss this concept and some important facts on it in the next section. In Section~3 
 we give upper bounds on the sizes $c_i$ of maximum $i$-independent sets in almost all 
cubic $n$-vertex graphs of large girth. The original plan was to show that for a fixed $k$ and large $n$, the sum $c_1+\ldots+c_k$ is less than $n$. 
But we were not able to prove it (and maybe this is not true). In Section 4, we give an upper bound on the size of the union of
an $1$-independent, a $2$-independent, and a $4$-independent sets which is less than $c_1+c_2+c_4$. This allows us  to 
 prove Theorem~\ref{e1} in the  last section.

\section{Preliminaries}\label{pr}
\subsection{Notation}
We mostly use standard notation.
  If $G$ is a (multi)graph and $v,u \in V(G)$, then $E_G(v, u)$ denotes the set of all edges in $G$ connecting  $v$ and $u$,
 $e_G(v, u) \coloneqq |E_G(v, u)|$, and $\deg_G(v)\coloneqq\sum_{u\in V(G)\setminus \{v\}} e_G(v,u)$. 
For $A \subseteq V(G)$, $G[A]$ denotes the sub(multi)graph of $G$ induced by $A$.
 The independence number of $G$ is denoted by $\alpha(G)$. For $k \in \mathbb{Z}_{> 0}$, $[k]$ denotes the set $\{1, \ldots, k\}$.


\subsection{The Configuration Model}
The configuration model is due in different versions to Bender and Canfield~\cite{BC1} and Bollob\'as~\cite{B2}. 
Our work is based on the version of Bollob\'as. Let $V$ be the vertex set of the graph,
 we are going to associate a 3-element set to each vertex in $V$. Let $n$ be an even positive integer.
 Let $V_n=[n]$ and consider the Cartesian product $W_n=V_n \times [3]$. 
A {\em configuration/pairing} (of order $n$ and degree $3$) is a partition of $W_n$ into $3n/2$ pairs, i.e.,~a perfect matching of
 elements in $W_n$. There are $$\frac{{3n \choose 2}\cdot {3n-2 \choose 2}\cdot\ldots\cdot {2 \choose 2}}{(3n/2)!}=(3n-1)!!$$ 
such matchings.
Let $\mathcal{F}_3(n)$  denote the collection of all $(3n-1)!!$ possible pairings on $W_n$. 
 We project each pairing $F \in \mathcal{F}_3(n)$ to a multigraph $\pi(F)$ on the vertex set $V_n$ by ignoring the second coordinate. 
Then $\pi(F)$ is a $3$-regular multigraph (which may or may not contain loops and multi-edges). Let $\pi(\mathcal{F}_3(n))=\{\pi(F)\,:\, F\in\mathcal{F}_3(n)\}$ 
be the set of $3$-regular multigraphs on $V_n$. By definition,
\begin{equation}\label{0311}
\mbox{\em each simple graph $G\in \pi(\mathcal{F}_3(n))$ corresponds to  $(3!)^n$ distinct pairings in $\mathcal{F}_3(n)$.} 
\end{equation}
We will call the elements of $V_n$ - {\em vertices}, and of $W_n$ - {\em points}.

\begin{defn}
Let $\mathcal{G}_g(n)$ be the set of all cubic graphs with vertex set $V_n=[n]$ and girth at least $g$ and 
$\mathcal{G}'_g(n)=\{F\in \mathcal{F}_3(n)\,:\, \pi(F)\in \mathcal{G}_g(n)\}$.
\end{defn}

We will use the following result: 
\begin{theo}[Wormald~\cite{W1}, Bollob\'as~\cite{B2}]\label{BW}
For each fixed $g \geq 3$, 
\begin{equation}\label{03112}
\lim\limits_{n \to \infty}  \frac{|\mathcal{G}'_g(n)|}{|\mathcal{F}_3(n)|} = \exp \left \{ -\sum\limits_{k=1}^{g-1}\frac{2^{k-1}}{k} \right \}.
\end{equation}

\end{theo}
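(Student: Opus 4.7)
The plan is to establish this limit by the method of moments applied to short cycle counts in the configuration model. For each $k\in\{1,2,\ldots,g-1\}$ let $X_k(F)$ denote the number of cycles of length $k$ in $\pi(F)$, where $F$ is a uniformly random element of $\mathcal{F}_3(n)$ (here a loop is a $1$-cycle and a multi-edge contributes two $2$-cycles). The event $F\in\mathcal{G}'_g(n)$ is exactly $\{X_1=X_2=\cdots=X_{g-1}=0\}$, so it suffices to prove that the random vector $(X_1,\ldots,X_{g-1})$ converges in distribution to a vector of independent Poisson variables with means $\lambda_k=2^{k-1}/k$ and then evaluate that joint law at $0$.

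To compute $\mathbb{E}[X_k]$ I would count ordered $k$-cycles at the level of points. Choose an ordered $k$-tuple of distinct vertices in $(n)_k$ ways, and at each vertex pick an ordered pair of distinct points (one for the incoming edge, one for the outgoing edge), giving a factor $(3\cdot 2)^k=6^k$; dividing by $2k$ for rotations and reflections yields $\frac{(n)_k\,6^k}{2k}$ prescribed ``cycle patterns.'' For a fixed pattern, the number of pairings of $\mathcal{F}_3(n)$ containing it is $(3n-2k-1)!!$, so
\begin{equation*}
\mathbb{E}[X_k]=\frac{(n)_k\,6^k}{2k}\cdot\frac{(3n-2k-1)!!}{(3n-1)!!}=\frac{(n)_k\,6^k}{2k\prod_{j=1}^{k}(3n-2j+1)}\xrightarrow[n\to\infty]{}\frac{6^k}{2k\cdot 3^k}=\frac{2^{k-1}}{k}=\lambda_k.
\end{equation*}

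To upgrade this to joint Poisson convergence I would compute the mixed factorial moments $\mathbb{E}\bigl[(X_1)_{m_1}(X_2)_{m_2}\cdots(X_{g-1})_{m_{g-1}}\bigr]$ for arbitrary non-negative integers $m_1,\ldots,m_{g-1}$. An almost identical counting argument, now placing $m_k$ vertex-disjoint and point-disjoint cycles of length $k$ for each $k$, shows this factorial moment equals
\begin{equation*}
\frac{(n)_{M}\,\prod_{k}6^{km_k}}{\prod_{k}(2k)^{m_k}m_k!}\cdot\frac{(3n-2S-1)!!}{(3n-1)!!}+O(n^{-1})\;\longrightarrow\;\prod_{k=1}^{g-1}\lambda_k^{m_k},
\end{equation*}
where $M=\sum_k km_k$ and $S=\sum_k km_k$, and the $O(n^{-1})$ absorbs the lower-order contributions from patterns that overlap in a vertex or a point (such overlaps save a factor of $n$). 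Since the Poisson distribution is determined by its moments and these factorial moments match those of independent $\mathrm{Poisson}(\lambda_k)$, the method of moments gives $(X_1,\ldots,X_{g-1})\Rightarrow(\mathrm{Po}(\lambda_1),\ldots,\mathrm{Po}(\lambda_{g-1}))$ with independent coordinates. Evaluating at $0$,
\begin{equation*}
\frac{|\mathcal{G}'_g(n)|}{|\mathcal{F}_3(n)|}=\mathbb{P}(X_1=\cdots=X_{g-1}=0)\longrightarrow\prod_{k=1}^{g-1}e^{-\lambda_k}=\exp\!\left(-\sum_{k=1}^{g-1}\frac{2^{k-1}}{k}\right),
\end{equation*}
which is the desired identity. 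The main obstacle is the bookkeeping in the factorial-moment computation: one must verify that configurations in which two prescribed short cycles share a vertex or a point contribute a factor of $n$ less and therefore vanish in the limit, so that the leading term factorises cleanly as a product of powers of the $\lambda_k$.
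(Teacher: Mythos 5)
The paper does not prove this statement: Theorem~\ref{BW} is quoted as a known result of Wormald and Bollob\'as, so there is no in-paper argument to compare yours against. Your proposal is essentially the standard proof from the random-graphs literature (close to Bollob\'as's original argument and to the treatment in Janson--\L uczak--Ruci\'nski, Chapter~9): the short cycle counts $X_1,\dots,X_{g-1}$ in the cubic configuration model are asymptotically independent Poisson with means $\lambda_k=(d-1)^k/(2k)=2^{k-1}/k$ for $d=3$, the event $F\in\mathcal{G}'_g(n)$ is exactly $\{X_1=\dots=X_{g-1}=0\}$, and evaluating the limiting joint law at zero gives \eqref{03112}. Your expectation computation is correct, and reducing the mixed factorial moments to the vertex-disjoint configurations (with overlapping configurations losing a factor of $n$, which for fixed $g$ and fixed $m_1,\dots,m_{g-1}$ is a finite case check) is the right and standard way to finish. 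Two small slips are worth fixing. First, a double edge is one $2$-cycle, not two (a triple edge would give three); this does not affect the event $\{X_2=0\}$. Second, the factor $m_k!$ in the denominator of your factorial-moment formula should not be there: $\mathbb{E}\bigl[(X_k)_{m_k}\bigr]$ counts \emph{ordered} $m_k$-tuples of distinct cycles, so as written your displayed expression tends to $\prod_k\lambda_k^{m_k}/m_k!$ rather than the $\prod_k\lambda_k^{m_k}$ you (correctly) claim as the limit. Neither issue affects the validity of the approach.
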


{\bf Remark.}
When we say that {\em a pairing $F$ has a multigraph property $\mathcal{A}$}, we  mean that $\pi(F)$
has  property $\mathcal{A}$. 

Since dealing with pairings is simpler than working with labeled simple regular graphs, we need the following 
well-known consequence of Theorem~\ref{BW}.

\begin{corl}[\cite{MK1}(Corollary 1.1),~\cite{JLR}(Theorem 9.5)]\label{MK2}
For fixed $g \geq 3$, any property that holds for $\pi(F)$ for almost all pairings  $F\in \mathcal{F}_3(n)$
 also holds for almost all graphs in $\mathcal{G}_g(n)$. 
\end{corl}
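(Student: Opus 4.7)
The plan is to prove the corollary by a direct counting argument that transfers a statement about the uniform measure on $\mathcal{F}_3(n)$ to the uniform measure on $\mathcal{G}_g(n)$, using the fact that $\pi$ is a uniformly $6^n$-to-$1$ map on simple graphs together with Theorem~\ref{BW} to control the relative size of $\mathcal{G}'_g(n)$ inside $\mathcal{F}_3(n)$.

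Let $\mathcal{A}$ be any multigraph property that holds for $\pi(F)$ for almost every $F\in \mathcal{F}_3(n)$, and let $\mathcal{B}_n$ denote the set of graphs in $\mathcal{G}_g(n)$ that do \emph{not} have property $\mathcal{A}$. First I would use~(\ref{0311}): since $g\geq 3$, every graph in $\mathcal{G}_g(n)$ is simple, so each $G\in \mathcal{B}_n$ is the projection of exactly $6^n$ pairings, and likewise each $G\in \mathcal{G}_g(n)$ is. Consequently
\[
\frac{|\mathcal{B}_n|}{|\mathcal{G}_g(n)|}=\frac{6^n|\mathcal{B}_n|}{6^n|\mathcal{G}_g(n)|}=\frac{|\{F\in \mathcal{G}'_g(n):\pi(F)\in \mathcal{B}_n\}|}{|\mathcal{G}'_g(n)|}.
\]

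Next I would bound the numerator by the total count of bad pairings in $\mathcal{F}_3(n)$ and multiply and divide by $|\mathcal{F}_3(n)|$:
\[
\frac{|\mathcal{B}_n|}{|\mathcal{G}_g(n)|}\leq \frac{|\{F\in \mathcal{F}_3(n):\pi(F)\notin \mathcal{A}\}|}{|\mathcal{F}_3(n)|}\cdot\frac{|\mathcal{F}_3(n)|}{|\mathcal{G}'_g(n)|}.
\]
The first factor is $o(1)$ by the hypothesis on $\mathcal{A}$. By Theorem~\ref{BW}, the second factor tends to the finite constant $\exp\{\sum_{k=1}^{g-1}2^{k-1}/k\}$, and in particular is $O(1)$ for fixed $g$. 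Therefore the left-hand side is $o(1)$, which is exactly the conclusion that $\mathcal{A}$ holds for almost every graph in $\mathcal{G}_g(n)$.

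There is no real obstacle here; the only subtlety to mind is distinguishing between the uniform measure on pairings and the uniform measure on simple graphs. The multiplicity statement~(\ref{0311}) handles this cleanly because every graph of girth at least $g\geq 3$ is simple, so within $\mathcal{G}'_g(n)$ the fibres of $\pi$ all have the common size $6^n$ and cancel. If $\mathcal{A}$ were only defined on simple graphs, I would extend it to multigraphs by declaring loops and multi-edges to violate~$\mathcal{A}$; this only enlarges the bad set and still yields $o(1)$, so the argument is unaffected.
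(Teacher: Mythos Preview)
Your argument is correct and essentially identical to the paper's proof: both pass from $\mathcal{G}_g(n)$ to $\mathcal{G}'_g(n)$ via the $6^n$-to-$1$ fibres of~\eqref{0311}, bound the bad pairings in $\mathcal{G}'_g(n)$ by all bad pairings in $\mathcal{F}_3(n)$, and then invoke Theorem~\ref{BW} to show $|\mathcal{F}_3(n)|/|\mathcal{G}'_g(n)|=O(1)$. The only cosmetic difference is notation.
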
 
\begin{proof}
Suppose property $\mathcal{A}$ holds for $\pi(F)$ for almost all  $F\in\mathcal{F}_3(n)$.
Let  $\mathcal{H}(n)$ denote the set of graphs in $\mathcal{G}_g(n)$ that do {\em not} have property $\mathcal{A}$
and $\mathcal{H}'(n)=\{F\in \mathcal{F}_3(n)\,:\, \pi(F)\in \mathcal{H}(n)\}$.
Let  $\mathcal{B}(n)$ denote the set of pairings   $F\in \mathcal{F}_3(n)$ such that $\pi(F)$  does { not} have property $\mathcal{A}$.
Then $\mathcal{H}'(n)\subseteq \mathcal{B}(n)$.
Hence by the choice of $\mathcal{A}$,
\begin{equation}\label{equ1}
 \frac{|\mathcal{H}'(n)|}{|\mathcal{F}_3(n)|}\leq  \frac{|\mathcal{B}(n)|}{|\mathcal{F}_3(n)|}\to 0\qquad \mbox{as}\; {n\to\infty}.
\end{equation}

By~\eqref{0311}, we have
$$\frac{|\mathcal{H}(n)|}{|\mathcal{G}_g(n)|}=\frac{|\mathcal{H}(n)|}{|\mathcal{H}'(n)|}\cdot\frac{|\mathcal{H}'(n)|}{|\mathcal{G}'_g(n)|}\cdot 
\frac{|\mathcal{G}'_g(n)|}{|\mathcal{G}_g(n)|}=\frac{1}{(3!)^n}\cdot \frac{|\mathcal{H}'(n)|}{|\mathcal{G}'_g(n)|}\cdot {(3!)^n}=
\frac{|\mathcal{H}'(n)|}{|\mathcal{G}'_g(n)|}.$$
Furthermore,
\begin{equation}\label{equ3}
\frac{|\mathcal{H}'(n)|}{|\mathcal{G}'_g(n)|}=\frac{|\mathcal{H}'(n)|}{|\mathcal{F}_3(n)|}\cdot\frac{|\mathcal{F}_3(n)|}{|\mathcal{G}'_g(n)|}.
\end{equation}
By~\eqref{equ1} and Theorem~\ref{BW}, the  right-hand side of~\eqref{equ3}
 tends to $0$ as $n$ tends to infinity.
\end{proof}



\section{Bounds for $c_1,c_2,\ldots $}

We will use the following theorem of McKay~\cite{MK1}.

\begin{theo}[McKay~\cite{MK1}]\label{MK1}
 For every $\epsilon > 0$, there exists an $N>0$ such that for each $n>N$,  
 $${|\{ F |  F \in \mathcal{F}_3(n)\,:\;  c_1(\pi(F))>0.45537n\}|} \ <\epsilon \cdot {(3n-1)!!}.$$
\end{theo}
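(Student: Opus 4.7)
The plan is to prove this via a first-moment calculation in the configuration model. For $F \in \mathcal{F}_3(n)$, let $X_s(F)$ be the number of independent sets of size $s$ in $\pi(F)$. Since any subset of an independent set is independent, it suffices to show that for $s_0 := \lfloor 0.45537\,n \rfloor + 1$ we have $\mathbb{E}[X_{s_0}] < \epsilon$ for all large $n$; Markov then yields
$$\frac{|\{F \in \mathcal{F}_3(n) : c_1(\pi(F)) > 0.45537\,n\}|}{(3n-1)!!} \;\le\; \Pr[X_{s_0} \ge 1] \;\le\; \mathbb{E}[X_{s_0}] \;<\; \epsilon.$$

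For the first moment, fix $S \subseteq V_n$ with $|S| = s$. The set $S$ is independent in $\pi(F)$ iff no pair of $F$ lies entirely in $S \times [3]$; equivalently, each of the $3s$ points over $S$ must be matched to one of the $3(n-s)$ points over $V_n \setminus S$. Direct enumeration gives
$$N(s) \;=\; \binom{3(n-s)}{3s}\,(3s)!\,\bigl(3(n-2s)-1\bigr)!!,$$
where the first two factors count the choices of $3s$ outside-partners together with the bijection between them and the $S$-points, and the last factor pairs up the remaining $3(n-2s)$ outside points (this is $0$ if $s > n/2$, which never occurs since $0.45537 < 1/2$). By linearity of expectation,
$$\mathbb{E}[X_s] \;=\; \binom{n}{s}\,\frac{N(s)}{(3n-1)!!}.$$
Applying Stirling's formula to each factor and writing $\alpha := s/n$, one obtains an explicit smooth function $f$ with $\tfrac{1}{n}\log \mathbb{E}[X_{\alpha n}] = f(\alpha) + o(1)$. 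The constant $0.45537$ is precisely chosen so that $f(\alpha) < 0$ for every $\alpha > 0.45537$, so $\mathbb{E}[X_{s_0}]$ decays exponentially.

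The main obstacle is not the combinatorial counting, which is essentially forced by the configuration model, but rather the sharp numerical analysis in the last step: pinning down the zero of $f$ to five decimal digits and verifying that $f$ is strictly negative (rather than marginally zero) at $\alpha = 0.45537$ requires carefully handling the Stirling error terms and solving a transcendental optimization. This is the technical content that McKay works out; for our application we simply invoke his explicit threshold. A minor subtlety is that the first-moment bound on $\Pr[X_{s_0} \ge 1]$ is rather crude, so one must verify that the exponential decay dominates the $\binom{n}{s_0}$ entropy factor — which is exactly what the choice of the constant $0.45537$ guarantees.
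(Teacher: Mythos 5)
The paper offers no proof of this statement --- it is imported wholesale from McKay \cite{MK1} --- so the only thing to check is whether your argument could actually deliver the stated constant. It cannot. Your combinatorial setup is correct: the number of pairings in which a fixed $s$-set $S$ is independent is indeed $\binom{3(n-s)}{3s}(3s)!\,(3(n-2s)-1)!!$, and Markov's inequality reduces everything to showing $\mathbb{E}[X_{s_0}]\to 0$. The fatal step is the unsupported claim that ``the constant $0.45537$ is precisely chosen so that $f(\alpha)<0$ for every $\alpha>0.45537$.'' Carrying out the Stirling computation you describe gives
\begin{equation*}
\frac{1}{n}\log \mathbb{E}[X_{\alpha n}] \;=\; -\alpha\ln\alpha+2(1-\alpha)\ln(1-\alpha)-\tfrac{3}{2}(1-2\alpha)\ln(1-2\alpha)+o(1),
\end{equation*}
and this exponent vanishes at $\alpha\approx 0.4591$, not at $0.45537$; at $\alpha=0.45537$ it is about $+0.02$, so $\mathbb{E}[X_{s_0}]$ is exponentially \emph{large} exactly where you need it to be small. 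No amount of care with Stirling error terms or with locating the root to five digits can fix this: the first moment of the plain count of independent sets simply does not see the constant $0.45537$.

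What the plain first moment proves is the weaker threshold $\approx 0.4591$, which is essentially Bollob\'as's bound \cite{B1}. McKay's improvement to $0.45537$ is obtained by a genuinely different, more refined moment computation (exploiting that random cubic graphs are locally tree-like --- the ``high girth'' of his title --- to count independent sets carrying additional local structure rather than all independent sets). So your closing paragraph misidentifies the obstacle: the hard part is not the numerics but the extra idea that beats the naive expectation. Note also that the gap is not harmless for this paper: Lemma~\ref{bigl} requires $c_1(G)<0.456n$, so the first-moment constant $0.4591$ would not suffice, and the precise value $0.45537$ (or at least something below $0.456$) is genuinely needed downstream.
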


\begin{defn}
A {\em $3$-regular tree} is a tree such that each vertex has degree $3$ or $1$. 
A {\em $(3,k,a)$-tree} is  a rooted $3$-regular tree 
$T$ with root $a$ of degree $3$ such that the distance in $T$ from  each of the leaves  to $a$ is $k$.
\end{defn}

\begin{defn} For a positive integer $s$ and a vertex $a$ in a graph $G$, the {\em ball $B_{G}(a,s)$ in $G$ of radius $s$ with center $a$}
is $\{v\in V(G)\,:\; d_G(v,a)\leq s\}$, where $d_G(v,a)$ denotes the distance in $G$ from $v$ to $a$.
\end{defn}

We first prove  simple bounds on $c_{2k}(G)$ and $c_{2k+1}(G)$ when $G\in \mathcal{G}_{2k+2}(n)$.

\begin{lemma}\label{trivial bounds}
Let $j $ be a fixed positive integer and $g\geq 2j +2$.  Then\\

(i)  For every $ G \in \mathcal{G}_g(n)$, $c_{2j }(G) \leq \frac{n}{3 \cdot 2^{j }-2}$.\\

(ii) For every $\epsilon > 0$, there exists an $N>0$ such that for each $n>N$, 
$${\left|\left\{  G \in \mathcal{G}_g(n)\,:\; c_{2j +1}(G)> \frac{0.45537n}{2^{j +1}-1}\right\}\right|}\ <\epsilon \cdot {|\mathcal{G}_g(n)|} .$$
\end{lemma}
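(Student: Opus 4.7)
For part (i), my plan is a straightforward ball-packing argument. Given a $2j$-independent set $S$ in some $G \in \mathcal{G}_g(n)$, any two vertices in $S$ are at distance at least $2j+1$, so the balls $B_G(v,j)$ for $v\in S$ are pairwise disjoint. Because $g \geq 2j+2 > 2j$, no cycle of length $\leq 2j$ exists, so each $B_G(v,j)$ induces a $(3,j,v)$-tree, which has exactly $1 + 3 + 6 + \cdots + 3\cdot 2^{j-1} = 3\cdot 2^{j}-2$ vertices. Summing over $v\in S$ gives $|S|(3\cdot 2^j-2) \leq n$, which is the claimed bound.

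For part (ii), my plan is to reduce to McKay's bound (Theorem~\ref{MK1}) on $c_1$ via an independent-set enlargement. The key claim I will prove is: \emph{if $S$ is $(2j+1)$-independent in $G\in\mathcal{G}_g(n)$ with $g\geq 2j+2$, then $c_1(G) \geq (2^{j+1}-1)\,|S|$.} Assuming this, McKay's theorem combined with Corollary~\ref{MK2} gives $c_1(G)\leq 0.45537n$ for almost all $G\in\mathcal{G}_g(n)$, and dividing by $2^{j+1}-1$ yields the desired upper bound on $|S|=c_{2j+1}(G)$.

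To prove the claim, for each $v\in S$ I take $I_v \subseteq B_G(v,j)$ to be the set of all vertices in $B_G(v,j)$ whose distance to $v$ has the same parity as $j$. Since $g\geq 2j+2$ forces $G[B_G(v,j)]$ to be a truncated $(3,j,v)$-tree with no extra chords, any two vertices of $I_v$ are at even distance $\geq 2$ in this tree and hence nonadjacent, so $I_v$ is independent in $G$. A geometric-series computation shows $|I_v| = 2^{j+1}-1$ in both parity cases (e.g., for even $j$, the sum $1 + \sum_{i=1}^{j/2} 3\cdot 2^{2i-1}$ telescopes to $2^{j+1}-1$, and similarly for odd $j$). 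Because $S$ is $(2j+1)$-independent, for distinct $v,v'\in S$ the balls $B_G(v,j)$ and $B_G(v',j)$ are disjoint \emph{and} carry no edge between them (any such edge would give $d_G(v,v')\leq 2j+1$), so $I:=\bigcup_{v\in S} I_v$ is an independent set of size $(2^{j+1}-1)|S|$, proving the claim.

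I expect the main conceptual step to be identifying the enlargement $S \mapsto I$ that boosts a $(2j+1)$-independent set to a $1$-independent set of size $(2^{j+1}-1)|S|$; the constant $2^{j+1}-1$ is precisely the independence number of a $(3,j,v)$-tree achieved by the same-parity layers, which is exactly what matches the denominator in the target bound. The remaining steps are routine: verifying disjointness of the balls and absence of cross-edges (both from the distance hypothesis), and invoking the already-proved passage from pairings to graphs in $\mathcal{G}_g(n)$ via Corollary~\ref{MK2}.
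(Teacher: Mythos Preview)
Your proof is correct and follows essentially the same approach as the paper's: for (i) the disjoint-ball packing argument is identical, and for (ii) the paper also constructs, inside each ball $B_G(a,j)$, the independent set $I_a=\bigcup_{i=0}^{\lfloor j/2\rfloor}S_{j-2i}$ of size $2^{j+1}-1$ (your ``same parity as $j$'' layers), takes the union over $a\in C_{2j+1}$, and then invokes Theorem~\ref{MK1} together with Corollary~\ref{MK2}.
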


\begin{proof}
(i) Let $C_{2j }$ be a $2j $-independent set in $G$ with $|C_{2j }|=c_{2j }(G)$. Since the distance between
any distinct $a,b\in C_{2j }$ is at least $2j +1$, 
  the balls $B_{G}(a,j )$ for all distinct  $a\in C_{2j }$ are disjoint. Moreover, 
since $g \geq 2j +2$,  each ball $B_{G}(a,j )$ induces
 a $(3,j ,a)$-tree $T_a$, and hence has
 $$1+3+3\cdot2+3\cdot2^2+\ldots +3\cdot2^{j -1}=3\cdot2^j -2$$  vertices.  This proves (i).

(ii)  Let $C_{2j +1}$ be a $(2j +1)$-independent set in $G$ with $|C_{2j +1}|=c_{2j +1}(G)$. 
As in the proof of (i), 
  the balls $B_{G}(a,j )$ for  distinct  $a\in C_{2j }$ 
   are disjoint, and each $B_{G}(a,j )$ induces
 a $(3,j ,a)$-tree $T_a$. But in this case, in addition, the balls with centers in distinct vertices of
   $C_{2j +1}$ are at distance at least $2$ from each other. Let $S_i$ be the set of vertices in $T_a$ at distance $i$ from $a$.
Then $|S_0|=1$, and  for each $1\leq i\leq j $, $|S_i|=3\cdot 2^{i-1}$. If follows that the set 
$I_a=\bigcup_{i=0}^{\lfloor j /2\rfloor}S_{j -2i}$ is independent, and 
$$|I_a| = \sum_{i=0}^{\lfloor j /2\rfloor} |S_{j -2i}|=2^{j +1}-1.$$
Therefore $I:=\bigcup_{a\in C_{2j +1}}I_a$ is an independent set in $G$ and $|I|=(2^{j +1}-1) c_{2j +1}(G)$.
This together with Theorem~\ref{MK1} and Corollary~\ref{MK2} implies (ii).
\end{proof}



\usetikzlibrary{shadows}

\begin{figure}[ht]\label{f1}

\begin{center}


\begin{tikzpicture}[scale=0.7, rotate=270]
\tikzset{button/.style={
preaction={fill=black,path fading=circle with fuzzy edge 20 percent,
opacity=.5,transform canvas={xshift=1mm,yshift=-1mm}},
preaction={pattern=#1,
path fading=circle with fuzzy edge 15 percent},
preaction={top color=white,
bottom color=black!50,
shading angle=45,
path fading=circle with fuzzy edge 15 percent,
opacity=0.2},
preaction={path fading=fuzzy ring 15 percent,
top color=black!5,
bottom color=black!0,
shading angle=45},
inner sep=2ex
},
button/.default=horizontal lines light blue,
circle}

\node [ font={\huge\bfseries}, shape=circle, minimum size=0.5cm, circular drop shadow, text=black, very thick, draw=black!55, top color=white,bottom color=black!0, text width=0.4cm, align=center] (v1) at (0,0) {$a$};
\node [ font={\huge\bfseries}, shape=circle, minimum size=0.5cm, circular drop shadow, text=black, very thick, draw=black!55, top color=white,bottom color=black!0, text width=0.2cm, align=center] (v2) at (2,6) {};
\node [ font={\huge\bfseries}, shape=circle, minimum size=0.5cm, circular drop shadow, text=black, very thick, draw=black!55, top color=white,bottom color=black!0, text width=0.2cm, align=center] (v3) at (2,0) {};
\node [ font={\huge\bfseries}, shape=circle, minimum size=0.5cm, circular drop shadow, text=black, very thick, draw=black!55, top color=white,bottom color=black!0, text width=0.2cm, align=center] (v4) at (2,-6) {};
\draw (v1) edge (v2);
\draw (v1) edge (v3);
\draw (v1) edge (v4);
\node [ font={\huge\bfseries}, shape=circle, minimum size=0.5cm, circular drop shadow, text=black, very thick, draw=black!55, top color=white,bottom color=black!0, text width=0.2cm, align=center] (v5) at (4,7.5) {};
\node [ font={\huge\bfseries}, shape=circle, minimum size=0.5cm, circular drop shadow, text=black, very thick, draw=black!55, top color=white,bottom color=black!0, text width=0.2cm, align=center] (v6) at (4,4.5) {};
\node [ font={\huge\bfseries}, shape=circle, minimum size=0.5cm, circular drop shadow, text=black, very thick, draw=black!55, top color=white,bottom color=black!0, text width=0.2cm, align=center] (v10) at (4,1.5) {};
\node [ font={\huge\bfseries}, shape=circle, minimum size=0.5cm, circular drop shadow, text=black, very thick, draw=black!55, top color=white,bottom color=black!0, text width=0.2cm, align=center] (v7) at (4,-1.5) {};
\node [ font={\huge\bfseries}, shape=circle, minimum size=0.5cm, circular drop shadow, text=black, very thick, draw=black!55, top color=white,bottom color=black!0, text width=0.2cm, align=center] (v8) at (4,-4.5) {};
\node [ font={\huge\bfseries}, shape=circle, minimum size=0.5cm, circular drop shadow, text=black, very thick, draw=black!55, top color=white,bottom color=black!0, text width=0.2cm, align=center] (v9) at (4,-7.5) {};
\draw (v2) edge (v5);
\draw (v2) edge (v6);
\draw (v3) edge (v7);
\draw (v4) edge (v8);
\draw (v9) edge (v4);
\draw (v3) edge (v10);
\node [ font={\huge\bfseries}, shape=circle, minimum size=0.5cm, circular drop shadow, text=black, very thick, draw=black!55, top color=white,bottom color=black!0, text width=0.2cm, align=center] (v11) at (6,8.25) {};
\node [ font={\huge\bfseries}, shape=circle, minimum size=0.5cm, circular drop shadow, text=black, very thick, draw=black!55, top color=white,bottom color=black!0, text width=0.2cm, align=center] (v12) at (6,6.75) {};
\node [ font={\huge\bfseries}, shape=circle, minimum size=0.5cm, circular drop shadow, text=black, very thick, draw=black!55, top color=white,bottom color=black!0, text width=0.2cm, align=center] (v13) at (6,5.25) {};
\node [ font={\huge\bfseries}, shape=circle, minimum size=0.5cm, circular drop shadow, text=black, very thick, draw=black!55, top color=white,bottom color=black!0, text width=0.2cm, align=center] (v14) at (6,3.75) {};
\node [ font={\huge\bfseries}, shape=circle, minimum size=0.5cm, circular drop shadow, text=black, very thick, draw=black!55, top color=white,bottom color=black!0, text width=0.2cm, align=center] (v15) at (6,2.25) {};
\node [ font={\huge\bfseries}, shape=circle, minimum size=0.5cm, circular drop shadow, text=black, very thick, draw=black!55, top color=white,bottom color=black!0, text width=0.2cm, align=center] (v16) at (6,0.75) {};
\node [ font={\huge\bfseries}, shape=circle, minimum size=0.5cm, circular drop shadow, text=black, very thick, draw=black!55, top color=white,bottom color=black!0, text width=0.2cm, align=center] (v17) at (6,-0.75) {};
\node [ font={\huge\bfseries}, shape=circle, minimum size=0.5cm, circular drop shadow, text=black, very thick, draw=black!55, top color=white,bottom color=black!0, text width=0.2cm, align=center] (v18) at (6,-2.25) {};
\node [ font={\huge\bfseries}, shape=circle, minimum size=0.5cm, circular drop shadow, text=black, very thick, draw=black!55, top color=white,bottom color=black!0, text width=0.2cm, align=center] (v19) at (6,-3.75) {};
\node [ font={\huge\bfseries}, shape=circle, minimum size=0.5cm, circular drop shadow, text=black, very thick, draw=black!55, top color=white,bottom color=black!0, text width=0.2cm, align=center] (v20) at (6,-5.25) {};
\node [ font={\huge\bfseries}, shape=circle, minimum size=0.5cm, circular drop shadow, text=black, very thick, draw=black!55, top color=white,bottom color=black!0, text width=0.2cm, align=center] (v21) at (6,-6.75) {};
\node [ font={\huge\bfseries}, shape=circle, minimum size=0.5cm, circular drop shadow, text=black, very thick, draw=black!55, top color=white,bottom color=black!0, text width=0.2cm, align=center] (v22) at (6,-8.25) {};
\draw (v11) edge (v5);
\draw (v5) edge (v12);
\draw (v6) edge (v13);
\draw (v6) edge (v14);
\draw (v10) edge (v15);
\draw (v10) edge (v16);
\draw (v7) edge (v17);
\draw (v7) edge (v18);
\draw (v8) edge (v19);
\draw (v8) edge (v20);
\draw (v9) edge (v21);
\draw (v9) edge (v22);
\end{tikzpicture}


\caption{A $(3,3,a)${-tree}   $ T_a$.}
\end{center}
\end{figure}



\begin{lemma}\label{C_2k}
Let $k$  be a  fixed positive integer and $x$ be a  real number with
$0<x < \frac{1}{3\cdot2^{k}-2}$. The number of pairings $F \in \mathcal{G}'_{2k+2}(n)$ such that $\pi(F)$
 has a $2k$-independent vertex set of size $xn$  is at most
$$q(n,k,x):={{n \choose xn} \cdot (3n-(6\cdot2^k-6)xn-1)!! } \cdot \prod\limits_{i=0}^{k-1}{(1-(3\cdot2^{i}-2)x)n \choose 3\cdot2^{i}xn } \cdot
(3\cdot2^{i}xn)! \cdot 3^{3\cdot2^{i}xn}.$$
\end{lemma}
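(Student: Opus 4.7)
The plan is a straightforward ``fix the centers, grow the balls, finish the pairing'' enumeration in the configuration model, producing exactly the three blocks of factors that make up $q(n,k,x)$. I would bound the left-hand side by the number of pairs $(C,F)$ with $C\subseteq V_n$, $|C|=xn$, $F\in\mathcal{F}_3(n)$, and $C$ a $2k$-independent set in $\pi(F)\in\mathcal{G}_{2k+2}(n)$. Since every such $C$ is an $xn$-subset of $V_n$ and there are $\binom{n}{xn}$ of those, summing over $C$ contributes the factor $\binom{n}{xn}$. Over-counting---the same $F$ being recorded for several $C$'s---is acceptable since we only seek an upper bound.

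With $C$ fixed, the hypotheses that $\pi(F)$ has girth at least $2k+2$ and that $C$ is $2k$-independent force the balls $B_{\pi(F)}(a,k)$, $a\in C$, to be pairwise disjoint and each to induce a $(3,k,a)$-tree on $3\cdot 2^{k}-2$ vertices, exactly as in the proof of Lemma~\ref{trivial bounds}. I would then build these $xn$ trees simultaneously, level by level in the configuration model. After levels $0,1,\dots,i$ are in place, $(3\cdot 2^{i}-2)xn$ vertices have been used. Level $i{+}1$ must acquire $3\cdot 2^{i}xn$ new vertices (two children of each non-root level-$i$ vertex and three for each root), chosen from the remaining $(1-(3\cdot 2^{i}-2)x)n$ candidates: that accounts for the binomial factor $\binom{(1-(3\cdot 2^{i}-2)x)n}{3\cdot 2^{i}xn}$. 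These chosen vertices are then assigned bijectively to the $3\cdot 2^{i}xn$ available ``downward'' points sitting at level $i$, contributing $(3\cdot 2^{i}xn)!$. Finally, for each newly placed level-$(i{+}1)$ vertex I pick which of its three points is paired with its designated parent point, giving $3^{3\cdot 2^{i}xn}$. Multiplying these three factors across $i=0,1,\dots,k-1$ is precisely the product in $q(n,k,x)$.

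Once the $xn$ trees are fully grown, the $(3\cdot 2^{k}-3)xn$ tree edges have consumed $(6\cdot 2^{k}-6)xn$ points, leaving $3n-(6\cdot 2^{k}-6)xn$ free points to be matched arbitrarily in $(3n-(6\cdot 2^{k}-6)xn-1)!!$ ways, which yields the last factor and completes the bound. There is no real obstacle here: the only careful step is the bookkeeping of points and ``slots'' at each level---each non-root internal tree vertex has one upward and two downward points, each root has three downward points, and each leaf at level $k$ has one upward and two free points---and the assumption $x<1/(3\cdot 2^{k}-2)$ ensures that every binomial coefficient and double factorial above is well-defined.
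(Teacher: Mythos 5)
Your proposal is correct and follows essentially the same route as the paper: fix the candidate set $C$ (contributing $\binom{n}{xn}$), grow the pairwise disjoint $(3,k,a)$-trees level by level with the same three factors (binomial choice of new vertices, bijection to free points, choice of point within each new vertex), and finish by pairing the remaining $3n-(6\cdot 2^{k}-6)xn$ points arbitrarily. The point/slot bookkeeping you describe matches the paper's computation exactly, so nothing further is needed.
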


\begin{proof}
To prove the lemma, we will show that  the total number of $2k$-independent  sets of size $xn$ in $\pi(F)$ over all $F \in \mathcal{G}'_{2k+2}(n)$ does not
 exceed  $q(n,k,x)$. Below we describe a procedure of constructing 
 for every  set  $C$ of size $xn$ in $[n]$ all pairings in $\mathcal{G}'_{2k+2}(n)$ for which $C$ is $2k$-independent.
  Not every obtained pairing will
be in  $ \mathcal{G}'_{2k+2}(n)$, but every $F \in \mathcal{G}'_{2k+2}(n)$ such that $C$
 is a $2k$-independent set in $\pi(F)$ 
 will be a result of this
 procedure:
 
\begin{enumerate}
\item
 We choose  a vertex set $C$ of size $xn$ from $[n]$. There are ${n \choose xn}$ ways to do it.
\item
In order $C$ to be $2k$-independent and $\pi(F)$ to have girth at  least $2k+2$, all the balls of radius $k$ with the centers in
$C$ must be disjoint, and for each $a\in C$,  the ball $B_{\pi(F)}(a,k)$ must induce a $(3,k,a)$-tree. Thus,
we have $(1-x)n \choose 3xn$ ways to choose the neighbors of $C$, call it $N(C)$, $(3xn)!$ ways to 
determine which vertex in $N(C)$ will be the neighbor for each point in $\pi^{-1}(C)$,
 and $3^{3xn}$ ways to decide which point of each vertex in $N(C)$ is adjacent to the corresponding point in $\pi^{-1}(C)$. 
 Each vertex of $N(C)$ will have $2$ free points left at this moment, and in total the set $\pi^{-1}(N(C))$ has now  $2 \cdot 3xn=6xn$ free points. 
\item
Similarly to the previous step, consecutively for  $i=1,2,\ldots,k-1$, we will decide which vertices and points are in the set $\pi^{-1}(N^{i+1}(C))$
of the vertices at distance $i$ from $C$, as follows. Before the $i$th iteration, we have $3x\cdot 2^{i}n$ free points in the
$3x\cdot 2^{i-1}n$ vertices of
$\pi^{-1}(N^{i}(C))$, and 
$$|C\cup N^1(C)\cup\ldots\cup N^{i}(C)|=xn\left(1+3(1+2+\ldots+2^{i-1})\right) = (3 \cdot 2^i - 2)xn.$$
We   choose $3x\cdot 2^{i}n$ vertices out of the remaining $\left(1-(3 \cdot 2^{i}-2)x\right)n$ 
vertices to include into $N^{i+1}(C)$, then we have $(3x\cdot 2^{i}n)!$ ways to 
determine which vertex in $N^{i+1}(C)$ will be the neighbor for each free point in $\pi^{-1}(N^{i}(C))$,
 and $3^{3x\cdot 2^{i}n}$ ways to decide which point of each vertex in $N^{i+1}(C)$ is adjacent to the corresponding point in 
 $\pi^{-1}(N^{i}(C))$. 
\item
 Finally, there are $3n-(6 \cdot 2^k - 6)xn$ free  points left and we have $(3n-(6 \cdot 2^k - 6)xn-1)!!$  ways to pair them. 
\end{enumerate}

This proves the bound.
\end{proof}

In the proofs below we will use Stirling's formula: { For every $n\geq 1$,}
\begin{equation}\label{SF}
\sqrt{2\pi n}\left(\frac{n}{e}\right)^n\leq n!\leq \sqrt{2\pi n}\left(\frac{n}{e}\right)^n\,e^{1/12n}.
\end{equation}


\begin{corl}\label{coreven}
Let $g\geq 22$ be fixed. For every $\epsilon > 0$, there exists an $N>0$ such that for each $n>N$, 
\begin{equation} \label{a}
{|\{  G \in \mathcal{G}_g(n)\, :\;  c_2(G)>0.236n:=b_2n\}|} \ <\epsilon\, \cdot {|\mathcal{G}_g(n)|},
\end{equation}

\begin{equation} \label{b}
{|\{G \in \mathcal{G}_g(n)\, :\;    c_4(G)>0.082n:=b_4n\}|} \ <\epsilon\, \cdot {|\mathcal{G}_g(n)|},
\end{equation}

\begin{equation} \label{c}
{|\{  G \in \mathcal{G}_g(n)\, :\;   c_6(G)>0.03n:=b_6n\}|} \ <\epsilon\, \cdot {|\mathcal{G}_g(n)|},
\end{equation}

\begin{equation} \label{d}
{|\{  G \in \mathcal{G}_g(n)\, :\;   c_8(G)>0.011n:=b_8n\}|} \ <\epsilon\, \cdot {|\mathcal{G}_g(n)|},
\end{equation}
and

\begin{equation} \label{e}
{|\{  G \in \mathcal{G}_g(n)\,:\;  c_{10}(G)>0.004n:=b_{10}n\}|} \ <\epsilon\, \cdot {|\mathcal{G}_g(n)|}.
\end{equation}
\end{corl}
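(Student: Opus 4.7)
The plan is to reduce each of the five inequalities~(\ref{a})--(\ref{e}) to Lemma~\ref{C_2k} via the same uniform first-moment template, and to invoke Corollary~\ref{MK2} together with Theorem~\ref{BW} to move from pairings to simple cubic graphs. Since $g\geq 22\geq 2k+2$ for every $k\in\{1,2,3,4,5\}$, we have $\mathcal{G}_g(n)\subseteq\mathcal{G}_{2k+2}(n)$ and so Lemmas~\ref{trivial bounds}(i) and~\ref{C_2k} apply throughout. Writing $b_{2k}$ for the claimed threshold in each line, it therefore suffices to show that
\[
\frac{|\{F\in\mathcal{G}'_{2k+2}(n)\,:\,c_{2k}(\pi(F))>b_{2k}n\}|}{(3n-1)!!}\longrightarrow 0 \qquad (n\to\infty).
\]

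First I would note that if $c_{2k}(\pi(F))>b_{2k}n$, then $\pi(F)$ contains a $2k$-independent set of some integer size $m\in(\lceil b_{2k}n\rceil,\lfloor n/(3\cdot 2^k-2)\rfloor]$, the upper bound coming from Lemma~\ref{trivial bounds}(i). Applying Lemma~\ref{C_2k} to each such $m$ bounds the numerator above by $n\cdot\max_x q(n,k,x)$, where the max is taken over $x$ in the compact interval $[b_{2k},\,1/(3\cdot 2^k-2))$. The polynomial prefactor $n$ will be harmless against the exponential decay obtained in the next step.

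Next I would apply Stirling's formula~\eqref{SF} to every factorial and double factorial appearing in $q(n,k,x)/(3n-1)!!$, using the identity $(3n-1)!!=(3n)!/(2^{3n/2}(3n/2)!)$ (legitimate since $n$ is even) to convert the double factorials. After cancellation the ratio takes the form $\exp\bigl(n\,\phi_k(x)+O(\log n)\bigr)$, where $\phi_k$ is an explicit real-analytic function of $x$ built from binary-entropy-like terms $H(y)=-y\log y-(1-y)\log(1-y)$ and constants determined by the tree-ball sizes $3\cdot 2^i$ and $3\cdot 2^i-2$. The constants $b_{2k}$ are chosen so that $\phi_k(b_{2k})<0$ with room to spare; extending this bound from the single point $b_{2k}$ to the whole interval $[b_{2k},1/(3\cdot 2^k-2)]$ is either a one-variable monotonicity check (compute $\phi_k'$ and verify its sign over the relevant range) or, more robustly, a direct evaluation on a fine grid combined with a uniform Lipschitz estimate. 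Absorbing the $(3!)^n$ factor between $|\mathcal{G}_g(n)|$ and $|\mathcal{G}'_g(n)|$ from~\eqref{0311} when translating back via Corollary~\ref{MK2} then concludes the argument.

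The main obstacle is the case-by-case numerical verification that $\phi_k(b_{2k})<0$ for $k=1,2,3,4,5$. The Stirling manipulations themselves are routine but bulky, and one must carefully track the right powers of $2$ and $3$ through the product in Lemma~\ref{C_2k}. Among the five cases $b_{10}=0.004$ appears the tightest, and I would perform that computation first to confirm that the margin is adequate before writing out the remaining four.
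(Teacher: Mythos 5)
Your proposal is correct in outline and uses the same first-moment machinery as the paper: bound the number of pairings admitting a $2k$-independent set of a given size via Lemma~\ref{C_2k}, estimate $q(n,k,x)/(3n-1)!!$ by Stirling's formula~\eqref{SF} to extract an exponential rate function (the paper's $f(x,k)$ in~\eqref{0312}), check numerically that this rate is strictly less than $1$ at the threshold, and transfer from pairings to graphs via Theorem~\ref{BW} and Corollary~\ref{MK2}. The one substantive difference is how the range of set sizes is handled, and here the paper's route eliminates exactly the step you identify as your main obstacle. You take a union bound over all integer sizes $m$ with $b_{2k}n<m\leq n/(3\cdot 2^k-2)$, which forces you to prove that the rate function is negative on the whole interval $[b_{2k},\,1/(3\cdot 2^k-2))$ --- an interval-wide numerical verification (monotonicity of $\phi_k$, or a grid plus a Lipschitz bound) repeated for each of the five cases. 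The paper instead exploits monotonicity of the property itself: any subset of a $2k$-independent set is $2k$-independent, so a graph with $c_{2k}(G)>b_{2k}n$ necessarily contains a $2k$-independent set of size exactly $x_1n$ for a single $x_1=x_1(n)$ chosen in a short interval $[b_{2k}-\delta,\,b_{2k}]$ so that $x_1n$ is an integer. Hence only the single values $f(0.236,1)<0.9964$, $f(0.082,2)<0.9977$, $f(0.03,3)<0.9981$, $f(0.011,4)<0.996$ and $f(0.004,5)<0.995$ need be computed, together with continuity of $f(\cdot,k)$ near $b_{2k}$ to absorb the integrality adjustment. I recommend adopting this shortcut; everything else in your plan, including your exact identity $(3n-1)!!=(3n)!/(2^{3n/2}(3n/2)!)$ (the paper uses the cruder but sufficient bounds $(3n-1)!!\geq\sqrt{(3n)!}/3n$ and $(3m-1)!!\leq\sqrt{(3m)!}$), goes through as described. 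One small caution if you do keep the union bound: Lemma~\ref{C_2k} is stated only for $x$ strictly below $1/(3\cdot 2^k-2)$, so the extreme size $m=\lfloor n/(3\cdot 2^k-2)\rfloor$ must be treated separately when it attains that endpoint.
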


\begin{proof}
By Lemma~\ref{C_2k},
$$q(n,k,x)={{n \choose xn} \cdot ((3n-(6\cdot2^k-6)xn-1)!!) }  \prod\limits_{i=0}^{k-1}{(1-(3\cdot2^{i}-2)x)n \choose 3\cdot2^{i}xn } \cdot
((3\cdot2^{i}xn)!) (3^{3\cdot2^{i}xn})$$

$$= \frac{(3n-(6\cdot2^k-6)xn-1)!!\cdot n!}{(xn)! \cdot ((1-x)n)!} \cdot 3^{3xn+6xn+\ldots +3 \cdot 2^{k-1}xn}$$

$$\cdot \frac{((1-x)n)! \cdot (3xn)!}{(3xn)! \cdot ((1-4x)n)!} \cdot \frac{((1-4x)n)! \cdot (6xn)!}{(6xn)! \cdot ((1-10x)n)!} \cdot \ldots  \cdot \frac{((1-(3 \cdot 2^{k-1} - 2)x)n)! \cdot (3 \cdot 2^{k-1}xn)!}{(3 \cdot 2^{k-1}xn)! \cdot ((1-(3 \cdot 2^k-2)x)n)!}$$

$$=\frac{(3n-(6\cdot2^k-6)xn-1)!!\cdot n!}{(xn)! \cdot ((1-(3 \cdot 2^k - 2)x)n)!} \cdot 3^{(3 \cdot 2^k - 3)xn}.$$

\noindent
We know that $$(3n-1)!! = \frac{(3n)!!}{3n}  \geq \frac{\sqrt{(3n)!}}{3n}$$
and
$$  (3n-(6 \cdot 2^k - 6)xn - 1)!! \leq \sqrt{(3n-(6 \cdot 2^k - 6)xn)!}.$$
Therefore,

$$\frac{q(n,k,x)}{(3n-1)!!} \leq (3n) \cdot   \left(\frac{(3n-(6 \cdot 2^k - 6)xn)!}{(3n)!} \right)^{\frac{1}{2}}   \cdot \frac{n!}{(xn)! \cdot ((1-(3 \cdot 2^k - 2)x)n)!} \cdot 3^{(3 \cdot 2^k - 3)xn}. $$
Using Stirling's formula~\eqref{SF},
we have

$$
\frac{q(n,k,x)}{(3n-1)!!}  =  O(n^2) \cdot \frac{\left(\frac{n}{e}\right)^{\frac{1}{2} \cdot (3n-(6 \cdot 2^k - 6)xn)} \cdot \left(\frac{n}{e}\right)^n }{\left(\frac{n}{e}\right)^{\frac{3n}{2}} \cdot \left(\frac{n}{e}\right)^{xn} \cdot \left(\frac{n}{e}\right)^{(1-(3\cdot2^k-2)x)n}} \cdot
\left(\frac{(1-(2^{k+1}-2)x)^{1.5-(3\cdot2^k - 3)x}}
 {x^x(1-(3\cdot2^k-2)x)^{1-(3\cdot2^k-2)x}}\right)^n$$ 

$$=O(n^2) \cdot  \left(\frac{(1-(2^{k+1}-2)x)^{1.5-(3\cdot2^k - 3)x}}
 {x^x(1-(3\cdot2^k-2)x)^{1-(3\cdot2^k-2)x}}\right)^n.$$

\noindent
Let 
\begin{equation}\label{0312}
f(x,k)=\frac{(1-(2^{k+1}-2)x)^{1.5-(3\cdot2^k - 3)x}}
 {x^x(1-(3\cdot2^k-2)x)^{1-(3\cdot2^k-2)x}},
 \end{equation}
 so that 
 \begin{equation}\label{03122}
\frac{q(n,k,x)}{|\mathcal{F}_3(n)|} = \frac{q(n,k,x)}{(3n-1)!!}  =  O(n^2) \, (f(x,k))^n.
 \end{equation}

 By plugging $x=0.236$ and $k=1$ into~\eqref{0312} (using a computer or a good calculator), we see that $0<f(0.236,1)<0.9964$. 
Since $f(x,1)$ is a smooth function for $0<x<1$, there exists $\delta_1$ such that $f(x,1)<0.9964$ for
all $x\in [0.236-\delta_1, 0.236]$. If $n>1/\delta_1$, then there exists an  $x_1=x_1(n)\in [0.236-\delta_1, 0.236]$ such that
$x_1n$ is an integer.
 By~\eqref{03122},
  $$
  \frac{q(n,1, x_1n)}{|\mathcal{F}_3(n)|}  =   O(n^2) \, (0.9964)^n\to 0\qquad \mbox{as}\;{n\to\infty}.
  $$
By the definition of $q(n,k,x)$,~\eqref{03112} and Corollary~\ref{MK2}, this implies~\eqref{a}.  
  
 Similarly, by plugging  the corresponding values of $x$ and $k$ into~\eqref{0312}, one can check that  
   $0<f(0.082,2)<0.9977$, $0<f(0.03,3)<0.9981$,  $0<f(0.011,4)<0.996$, and $0<f(0.004,5)<0.995$. 
 Thus repeating the argument of the previous paragraph, we obtain that  
    \eqref{b}, \eqref{c}, \eqref{d}, \eqref{e} also hold.
\end{proof}

\begin{lemma}\label{C_2k+1}
Let $k$ be a fixed positive integer and $0<x < \frac{0.45537}{2^{k+1}-1}$. The number of pairings $F \in \mathcal{G}'_{2k+2}(n)$ such that $\pi(F)$ has a $(2k+1)$-independent vertex set of size $xn$ is at most
$$
r(n,k,x):=\frac{{n \choose xn} \cdot (3(n-(3\cdot2^k-2)xn))! \cdot (3(n-(4\cdot2^k-2)xn)-1)!!
}{(3(n-(4\cdot2^k-2)xn))!}\qquad
$$
\begin{equation}\label{eq1}
\qquad\times
\prod_{i=0}^{k-1}{(1-(3\cdot2^{i}-2)x)n \choose 3\cdot2^{i}xn }  \cdot (3\cdot2^{i}xn)! \cdot 3^{3\cdot2^{i}xn}.
\end{equation}
\end{lemma}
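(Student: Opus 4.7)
My plan is to mirror the proof of Lemma~\ref{C_2k}, replacing only the final step so as to enforce the stronger constraint that $C$ is $(2k+1)$-independent rather than merely $2k$-independent. For each fixed $xn$-subset $C \subseteq [n]$, I will describe a procedure whose output includes every pairing $F \in \mathcal{G}'_{2k+2}(n)$ for which $C$ is $(2k+1)$-independent in $\pi(F)$, and whose total count (summed over $C$) equals $r(n,k,x)$.

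Steps 1--3 duplicate the construction of Lemma~\ref{C_2k}: pick $C$ in $\binom{n}{xn}$ ways, and then iteratively, for $i=0,1,\ldots,k-1$, choose the $3\cdot2^{i}xn$ vertices at distance $i+1$ from $C$ (contributing $\binom{(1-(3\cdot2^{i}-2)x)n}{3\cdot2^{i}xn}$), the bijection between the $3\cdot2^{i}xn$ free points at distance $i$ and the new vertices (factor $(3\cdot2^{i}xn)!$), and which point of each new vertex is used (factor $3^{3\cdot2^{i}xn}$). Since $(2k+1)$-independence implies $2k$-independence and $g\geq 2k+2$, the radius-$k$ balls around $C$ are pairwise disjoint and each induces a $(3,k,a)$-tree, so this enumeration is valid. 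After all iterations we have placed $(3\cdot2^{k}-2)xn$ vertices; the leaf layer $N^{k}(C)$ carries $3\cdot2^{k}xn$ free points, while the set of $A:=n-(3\cdot2^{k}-2)xn$ untouched vertices carries $3A$ free points.

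The key new observation is that in any valid $F$ no two leaf free points are paired with each other. Indeed, a leaf--leaf edge between distinct trees $T_a$ and $T_b$ would yield $d_{\pi(F)}(a,b)\leq 2k+1$, contradicting $(2k+1)$-independence of $C$, while a leaf--leaf edge inside a single $T_a$ would close a cycle of length at most $2k+1$, contradicting girth $\geq 2k+2$. Hence each of the $3\cdot2^{k}xn$ leaf free points must be paired with a non-ball free point. Fixing a canonical order on the leaf free points and choosing for each a distinct partner among the $3A$ non-ball free points gives
\[
(3A)(3A-1)\cdots(3A-3\cdot2^{k}xn+1) \;=\; \frac{(3(n-(3\cdot2^{k}-2)xn))!}{(3(n-(4\cdot2^{k}-2)xn))!}
\]
options. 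Finally, the remaining $3(n-(4\cdot2^{k}-2)xn)$ free points are paired arbitrarily in $(3(n-(4\cdot2^{k}-2)xn)-1)!!$ ways. Multiplying all the factors over all choices of $C$ produces $r(n,k,x)$.

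Every valid $F$ is reached by the procedure (some outputs may fail the full girth condition beyond the balls, but that only strengthens the inequality). The main thing to verify carefully is the leaf-to-leaf exclusion in the third paragraph; once that is justified by the combined use of the independence and girth hypotheses, the rest is the same bookkeeping as in Lemma~\ref{C_2k}, augmented by the partnered-pairing and residual-pairing factors.
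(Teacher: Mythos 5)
Your proposal matches the paper's own proof essentially step for step: the same iterative ball construction as in Lemma~\ref{C_2k}, followed by the observation that the leaf layer $N^k(C)$ must be independent, so its $3\cdot 2^k xn$ free points are matched injectively into the $3(n-(3\cdot2^k-2)xn)$ outside free points, giving the falling-factorial ratio, with the residual double factorial for the rest. Your explicit case split justifying leaf independence (between-tree edges violate $(2k+1)$-independence, within-tree edges violate girth) is slightly more detailed than the paper's one-line assertion, but the argument is the same.
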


\begin{proof}
 We will show that  the total number of $(2k+1)$-independent  sets of size $xn$ in $\pi(F)$ over all $F \in \mathcal{G}'_{2k+2}(n)$ does not
 exceed  $r(n,k,x)$.
 Below we describe a procedure of constructing 
 for every  set  $C$ of size $xn$ in $[n]$ all pairings in $\mathcal{G}'_{2k+2}(n)$ for which $C$ is $(2k+1)$-independent.
  Not every obtained pairing will
be in  $ \mathcal{G}'_{2k+2}(n)$, but every $F \in \mathcal{G}'_{2k+2}(n)$ such that $C$
 is a $(2k+1)$-independent set in $\pi(F)$ 
 will be a result of this
 procedure:

\begin{enumerate}
\item
We choose a vertex set $C$ of size $xn$ from $[n]$. There are $n \choose xn$ ways to do it.
\item

In order $C$ to be $(2k+1)$-independent and $\pi(F)$ to have girth at  least $2k+2$, all the balls of radius $k$ with the centers in
$C$ must be disjoint, and for each $a\in C$,  the ball $B_{\pi(F)}(a,k)$ must induce a $(3,k,a)$-tree. Thus,
we have $(1-x)n \choose 3xn$ ways to choose the neighbors of $C$, call it $N(C)$, $(3xn)!$ ways to 
determine which vertex in $N(C)$ will be the neighbor for each point in $\pi^{-1}(C)$,
 and $3^{3xn}$ ways to decide which point of each vertex in $N(C)$ is adjacent to the corresponding point in $\pi^{-1}(C)$. 
 Each vertex of $N(C)$ will have $2$ free points left at this moment, and in total the set $\pi^{-1}(N(C))$ has now  $2 \cdot 3xn=6xn$ free points. 

\item
Similarly to the previous step, consecutively for  $i=1,2,\ldots,k-1$, we will decide which vertices and points are in the set $\pi^{-1}(N^{i+1}(C))$
of the vertices at distance $i$ from $C$, as follows. Before the $i$th iteration, we have $3x\cdot 2^{i}n$ free points in the
$3x\cdot 2^{i-1}n$ vertices of
$\pi^{-1}(N^{i}(C))$, and 
$$|C\cup N^1(C)\cup\ldots\cup N^{i}(C)|=xn\left(1+3(1+2+\ldots+2^{i-1})\right) = (3 \cdot 2^i - 2)xn.$$
We   choose $3x\cdot 2^{i}n$ vertices out of the remaining $\left(1-(3 \cdot 2^{i}-2)x\right)n$ 
vertices to include into $N^{i+1}(C)$, then we have $(3x\cdot 2^{i}n)!$ ways to 
determine which vertex in $N^{i+1}(C)$ will be the neighbor for each free point in $\pi^{-1}(N^{i}(C))$,
 and $3^{3x\cdot 2^{i}n}$ ways to decide which point of each vertex in $N^{i+1}(C)$ is adjacent to the corresponding point in 
 $\pi^{-1}(N^{i}(C))$. 

\item
 Let $N^0(C) := C$ and $S := \cup_{i=0}^{k}N^i(C)$. In order the distance between each pair of vertices in $C$ to be at least $2k+2$, $N^k(C)$ has to be an independent set. Therefore, each of the $3x \cdot 2^k n$  free points in the $3x \cdot 2^{k-1} n$ vertices of $\pi^{-1}(N^k(C))$ has to be paired with one of the remaining $ 3(n-(3\cdot2^k-2)xn)$ free points of $\pi^{-1}([n] - S)$ 
and we have $$\frac{(3(n-(3\cdot2^k-2)xn))!}{(3(n-(4\cdot2^k-2)xn))!}$$  ways to do that. 
\item
Finally, there are $3(n-(4\cdot2^k-2)xn)$  free points left and we have $(3(n-(4\cdot2^k-2)xn)-1)!!$  ways to pair them.

\end{enumerate}

\end{proof}
 


\begin{corl}\label{corodd}
Let $g\geq 24$ be fixed. For every $\epsilon > 0$, there exists an $N>0$ such that for each $n>N$, 
\begin{equation} \label{p}
{|\{  G \in \mathcal{G}_g(n)\, :\;  c_3(G)>0.1394n:=b_3n\}|} \ <\epsilon\, \cdot {|\mathcal{G}_g(n)|},
\end{equation}

\begin{equation} \label{q}
{|\{G \in \mathcal{G}_g(n)\, :\;    c_5(G)>0.05n:=b_5n\}|} \ <\epsilon\, \cdot {|\mathcal{G}_g(n)|},
\end{equation}

\begin{equation} \label{r}
{|\{  G \in \mathcal{G}_g(n)\, :\;   c_7(G)>0.0182n:=b_7n\}|} \ <\epsilon\, \cdot {|\mathcal{G}_g(n)|},
\end{equation}

\begin{equation} \label{s}
{|\{  G \in \mathcal{G}_g(n)\, :\;   c_9(G)>0.0063n:=b_9n\}|} \ <\epsilon\, \cdot {|\mathcal{G}_g(n)|},
\end{equation}
and

\begin{equation} \label{t}
{|\{  G \in \mathcal{G}_g(n)\,:\;  c_{11}(G)>0.0022n:=b_{11}n\}|} \ <\epsilon\, \cdot {|\mathcal{G}_g(n)|}.
\end{equation}
\end{corl}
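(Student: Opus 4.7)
The plan is to mirror the proof of Corollary~\ref{coreven}, substituting Lemma~\ref{C_2k+1} for Lemma~\ref{C_2k} and accounting for the extra pairing stage. Let $\alpha=\alpha(x,k):=(3\cdot 2^{k}-2)x$ and $\beta=\beta(x,k):=(4\cdot 2^{k}-2)x$. Exactly as in the even case, the product $\prod_{i=0}^{k-1}\binom{(1-(3\cdot 2^{i}-2)x)n}{3\cdot 2^{i}xn}(3\cdot 2^{i}xn)!$ telescopes to $((1-x)n)!/((1-\alpha)n)!$, and $\prod_{i=0}^{k-1} 3^{3\cdot 2^{i}xn}=3^{3(2^{k}-1)xn}$. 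Combining these with $\binom{n}{xn}$ collapses the formula of Lemma~\ref{C_2k+1} to
\begin{equation*}
r(n,k,x)=\frac{n!}{(xn)!\,((1-\alpha)n)!}\cdot\frac{(3(1-\alpha)n)!}{(3(1-\beta)n)!}\cdot(3(1-\beta)n-1)!!\cdot 3^{3(2^{k}-1)xn}.
\end{equation*}

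Second, I would bound $r(n,k,x)/(3n-1)!!$ via Stirling's formula~\eqref{SF}, together with $(3(1-\beta)n-1)!!\le\sqrt{(3(1-\beta)n)!}$ in the numerator and $(3n-1)!!\ge\sqrt{(3n)!}/(3n)$ in the denominator, exactly as in the derivation of~\eqref{03122}. A routine computation tracking $\alpha$ and $\beta$ simultaneously shows that all the miscellaneous powers of $3$, $2$, and $n/e$ cancel, thanks to the identity $-3\alpha+\tfrac{3}{2}\beta=-3(2^{k}-1)x=-(\alpha-x)$, which makes the $n/e$ exponent vanish and balances the $3^{3(2^{k}-1)xn}$ factor against the $3^{-3(2^{k}-1)xn}$ arising from Stirling. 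What remains is
\begin{equation*}
\frac{r(n,k,x)}{(3n-1)!!}=O(n^{c})\cdot (h(x,k))^{n},\qquad h(x,k):=\frac{(1-\alpha)^{2(1-\alpha)}}{x^{x}\,(1-\beta)^{\frac{3}{2}(1-\beta)}}.
\end{equation*}

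Third, I would verify by direct numerical evaluation that $h(x,k)<1$ at each of the five prescribed pairs $(x,k)\in\{(0.1394,1),(0.05,2),(0.0182,3),(0.0063,4),(0.0022,5)\}$. Since $h(\cdot,k)$ is smooth on $(0,\tfrac{1}{4\cdot 2^{k}-2})$, repeating the argument from Corollary~\ref{coreven}---picking $\delta_{k}>0$ so that $h(\cdot,k)<1$ on $[x-\delta_{k},x]$, and choosing $n>1/\delta_{k}$ so that some $x_{k}(n)\in[x-\delta_{k},x]$ has $x_{k}(n)n$ an integer---yields $r(n,k,x_{k}(n)n)/|\mathcal{F}_{3}(n)|\to 0$. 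Theorem~\ref{BW} and Corollary~\ref{MK2} then convert this into~\eqref{p}--\eqref{t}.

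The main obstacle is the extra factor $(3(1-\alpha)n)!/(3(1-\beta)n)!\cdot(3(1-\beta)n-1)!!$ from Step~4 of the procedure in Lemma~\ref{C_2k+1}, absent in the even case: the Stirling expansion now involves two independent linear functions of $x$ (namely $\alpha$ and $\beta$), and one must check the cancellation identities above carefully to confirm that the residual exponential still depends on $x$ only through $h(x,k)$. A secondary concern is precision: the target values of $x$ appear to be essentially optimal for this method (e.g.\ $h(0.1394,1)\approx 0.998$ and $h(0.05,2)\approx 0.998$), so the numerical verification must be carried out to enough decimal places to rule out $h(x,k)\ge 1$.
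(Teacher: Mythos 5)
Your proposal is correct and follows essentially the same route as the paper: the telescoping of the product, the bounds $(3(1-\beta)n-1)!!\le\sqrt{(3(1-\beta)n)!}$ and $(3n-1)!!\ge\sqrt{(3n)!}/(3n)$, and the Stirling cancellations all match, and your function $h(x,k)=(1-\alpha)^{2(1-\alpha)}/\bigl(x^{x}(1-\beta)^{\frac{3}{2}(1-\beta)}\bigr)$ is exactly the paper's $h$ in~\eqref{hxk} rewritten with $\alpha=(3\cdot2^{k}-2)x$ and $\beta=(4\cdot2^{k}-2)x$. The concluding numerical verification at the five pairs $(x,k)$ and the smoothness/integrality argument transferring the bound to $\mathcal{G}_g(n)$ via Theorem~\ref{BW} and Corollary~\ref{MK2} are likewise identical to the paper's.
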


\begin{proof} 
By Lemma \ref{C_2k+1},

$$
r(n,k,x)=\frac{{n \choose xn} \cdot (3(n-(3\cdot2^k-2)xn))! \cdot (3(n-(4\cdot2^k-2)xn)-1)!!
}{  (3(n-(4\cdot2^k-2)xn))!}\qquad
$$
\begin{equation}
\qquad\times
\prod_{i=0}^{k-1}{(1-(3\cdot2^{i}-2)x)n \choose 3\cdot2^{i}xn }  \cdot (3\cdot2^{i}xn)! \cdot 3^{3\cdot2^{i}xn}
\end{equation}

$$=\frac{ (3(n-(3\cdot2^k-2)xn))! \cdot (3(n-(4\cdot2^k-2)xn)-1)!!
}{(3(n-(4\cdot2^k-2)xn))!} \cdot \frac{n!}{(xn)! \cdot ((1-x)n)!} \cdot 3^{3xn+6xn+\ldots +3 \cdot 2^{k-1}xn}$$

$$ \cdot \frac{((1-x)n)! \cdot (3xn)!}{(3xn)! \cdot ((1-4x)n)!} \cdot \frac{((1-4x)n)! \cdot (6xn)!}{(6xn)! \cdot ((1-10x)n)!} \cdot \ldots  \cdot \frac{((1-(3 \cdot 2^{k-1} - 2)x)n)! \cdot (3 \cdot 2^{k-1}xn)!}{(3 \cdot 2^{k-1}xn)! \cdot ((1-(3 \cdot 2^k-2)x)n)!}$$

$$=\frac{ (3(n-(3\cdot2^k-2)xn))! \cdot (3(n-(4\cdot2^k-2)xn)-1)!!
}{(3(n-(4\cdot2^k-2)xn))!} \cdot \frac{n!}{(xn)! \cdot ((1-(3 \cdot 2^k - 2)x)n)!} \cdot 3^{(3 \cdot 2^k - 3)xn}$$

We know that $$(3n-1)!! \geq \frac{(3n)!!}{3n}  \geq \frac{\sqrt{(3n)!}}{3n}$$

and

$$   (3(n-(4\cdot2^k-2)xn)-1)!! \leq \sqrt{ (3(n-(4\cdot2^k-2)xn))!}.$$

Therefore,

$$\frac{r(n,k,x)}{(3n-1)!!} \leq (3n) \cdot   \left(\frac{ (3(n-(4\cdot2^k-2)xn))!}{(3n)!} \right)^{\frac{1}{2}} \cdot \frac{(3(n-(3\cdot2^k-2)xn))!}{(3(n-(4\cdot2^k-2)xn))!} $$

$$ \cdot \frac{n!}{(xn)! \cdot ((1-(3 \cdot 2^k - 2)x)n)!} \cdot 3^{(3 \cdot 2^k - 3)xn}. $$

By Stirling's formula $\eqref{SF}$,

 $$\frac{r(n,k,x)}{(3n-1)!!} = O(n^3) \cdot \frac{\left(\frac{n}{e}\right)^{\frac{3}{2} \cdot (n-(4 \cdot 2^k - 2)xn)} \cdot \left(\frac{n}{e}\right)^{3(n-(3 \cdot 2^k - 2)xn)} \cdot  \left(\frac{n}{e}\right)^{n} }{ \left(\frac{n}{e}\right)^{\frac{3n}{2}} \cdot \left(\frac{n}{e}\right)^{3(n-(4 \cdot 2^k - 2)xn)} \cdot \left(\frac{n}{e}\right)^{xn} \cdot \left(\frac{n}{e}\right)^{(1-(3 \cdot 2^k - 2)x)n}} $$
$$ \cdot \left(\frac{(1-(3\cdot2^k-2)x)^{2-(6\cdot2^k-4)x}}{x^x(1-(4\cdot2^k-2)x)^{1.5-(6\cdot2^k-3)x}}\right)^n$$ 
$$=O(n^3) \cdot \left(\frac{(1-(3\cdot2^k-2)x)^{2-(6\cdot2^k-4)x}}{x^x(1-(4\cdot2^k-2)x)^{1.5-(6\cdot2^k-3)x}}\right)^n.$$
Let
\begin{equation}\label{hxk}
h(x,k)=\frac{(1-(3\cdot2^k-2)x)^{2-(6\cdot2^k-4)x}}{x^x(1-(4\cdot2^k-2)x)^{1.5-(6\cdot2^k-3)x}},
\end{equation}
so that

\begin{equation}\label{rf3}
\frac{r(n,k,x)}{|\mathcal{F}_3(n)|} = \frac{r(n,k,x)}{(3n-1)!!} = O(n^3)(h(x,k))^n.
\end{equation}

By plugging $x=0.1394$ and $k=1$ into $\eqref{hxk}$ (using a computer or a good calculator), we see that $0<h(0.1394,1)<0.9974$. 
Since $h(x,1)$ is a smooth function for $0<x<1$, there exists $\nu_1$ such that $h(x,1)<0.9974$ for
all $x\in [0.1394-\nu_1, 0.1394]$. If $n>1/\nu_1$, then there exists an  $x_1=x_1(n)\in [0.1394-\nu_1, 0.1394]$ such that
$x_1n$ is an integer.
By $\eqref{rf3}$,
$$ \frac{r(n,1,x_1n)}{|\mathcal{F}_3(n)|} = O(n^3)(0.9974)^n \to 0\qquad \mbox{as}\;{n\to\infty}.$$ 
By the definition of $r(n,k,x)$, \eqref{03112} and Corollary \ref{MK2}, this implies \eqref{p}.

Similarly, by plugging the corresponding values of $x$ and $k$ into $\eqref{hxk}$, one can check that 
$0<h(0.05,2)<0.9985$, $0<h(0.0182,3)<0.9973$, $0<h(0.0063,4)<0.9986$, and $0<h(0.0022,5)<0.9979$. Thus repeating the argument of the previous paragraph, we obtain that \eqref{q}, \eqref{r}, \eqref{s}, \eqref{t} also hold.

\end{proof}

\section{Bound on $|C_1 \cup C_2 \cup C_4|$}

\begin{defn}
For a fixed graph $G$, let  $c_{1,2,4}(G)$ be the maximum size  of $|C_1 \cup C_2 \cup C_4|$, where
$C_1$, $C_2$ and $C_4$ are disjoint  subsets of $V(G)$ such that $C_i$ is $i$-independent for all $i\in\{1,2,4\}$.
\end{defn}

In this section we prove an upper bound for $c_{1,2,4}(G)$. 

\begin{lemma}\label{bigl}
Let $G$ be an $n$-vertex cubic graph with girth at least $9$ and 
\begin{equation}\label{c1}
c_1(G)<0.456n.
\end{equation}
Then $c_{1,2,4}(G)\leq 0.7174n:=b_{1,2,4}n.$
\end{lemma}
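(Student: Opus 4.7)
The plan is to construct a carefully enlarged independent set (bounded by $c_1(G)<0.456n$), combine it with edge counts and a local analysis of $B_2$-balls, and close the resulting estimate via a small linear program, so as to save the gap between the naive bound $0.456n+n/4+n/10=0.806n$ and the target $0.7174n$.

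First I would define $C_2^0:=\{v\in C_2:N(v)\cap C_1=\emptyset\}$ and $C_4^0:=\{v\in C_4:N(v)\cap(C_1\cup C_2^0)=\emptyset\}$, and let $\widetilde C_1:=C_1\cup C_2^0\cup C_4^0$. A direct check using the disjointness of $C_1,C_2,C_4$ together with the $2$-independence of $C_2$ and $4$-independence of $C_4$ shows that $\widetilde C_1$ is independent in $G$; hence $|\widetilde C_1|=|C_1|+|C_2^0|+|C_4^0|\le c_1(G)<0.456n$. Next, the $2$-independence of $C_2$ forces each vertex of $C_1\cup C_4$ to have at most one neighbour in $C_2$ (otherwise two such neighbours would lie at distance $2$ in $C_2$), and similarly the $4$-independence of $C_4$ forces each vertex of $C_1\cup C_2$ to have at most one neighbour in $C_4$. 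Counting edges then gives $|C_2\setminus C_2^0|\le|C_1|$ and $|C_4\setminus C_4^0|\le|C_1|+|C_2^0|$. Together with the elementary girth-$9$ structural bounds $|C_2|\le n/4$ and $|C_4|\le n/10$ coming from the disjoint $B_1$- and $B_2$-balls around $C_2$ and $C_4$, this sets up a small linear program in the variables $a=|C_1|/n$, $b=|C_2|/n$, $c=|C_4|/n$, $p=|C_2^0|/n$, $q=|C_4^0|/n$.

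The constraints collected so far yield only the trivial bound $a+b+c\le 0.806$, so the hard part is the additional saving. For this I would analyse, for each $v\in C_4$, the ball $B_2(v)$, which is a $(3,2,v)$-tree on $10$ vertices since the girth is at least $9$: a short case analysis on the classes of the three level-$1$ vertices of $B_2(v)$ (together with the fact that $C_2\cap B_2(v)$ is a $2$-independent set in that tree and $C_4\cap B_2(v)=\{v\}$) shows $|(C_1\cup C_2\cup C_4)\cap B_2(v)|\le 8$, so $|A\cap B_2(v)|\ge 2$, where $A:=V(G)\setminus(C_1\cup C_2\cup C_4)$. Summing over $v\in C_4$ and using that those $B_2$-balls are pairwise disjoint gives $|A|\ge 2|C_4|$. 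A parallel analysis for $v\in C_2$ together with an inclusion-exclusion bound on the overlap of $B_2$-balls around the $2$-independent set $C_2$ (each $A$-vertex lies in at most $3$ such balls, and in at most one ball centred in $C_4$) yields a companion inequality of the form $|A|\ge \alpha|C_2|+\beta|C_4|$ with explicit positive $\alpha,\beta$.

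Finally, maximizing $a+b+c$ over the LP assembled from all of the above constraints by a short case analysis on which inequalities are tight at the optimum yields $c_{1,2,4}(G)\le 0.7174n$, as desired. The main obstacle is precisely this LP-closing step: the per-ball inequality $|A\cap B_2(v)|\ge 2$ alone is not sharp enough, and one has to weight the $C_2$- and $C_4$-contributions to $|A|$ against the independent-set bound on $\widetilde C_1$ in exactly the right proportion (with girth $\ge 9$ used to control the overlap of the $B_2$-balls around $C_2$) so that the maximum of the LP actually equals $b_{1,2,4}=0.7174$.
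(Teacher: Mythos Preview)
Your proposal has a genuine gap at precisely the point you flag as ``the main obstacle'': the LP you assemble does \emph{not} close to $0.7174$. Concretely, take $(a,b,c,p,q)=(0.456,\,0.25,\,0.098,\,0,\,0)$. All of your constraints are satisfied: $a+p+q=0.456$, $b-p=0.25\le a$, $c-q=0.098\le a+p$, $b\le 1/4$, $c\le 1/10$, and the ball inequality $|A|\ge 2|C_4|$ becomes $a+b+3c=1$. Yet $a+b+c=0.804$. Your ``companion'' inequalities of the shape $|A|\ge\alpha|C_2|+\beta|C_4|$ do not help: the overlap argument (each $A$-vertex in at most three $C_2$-balls and at most one $C_4$-ball, each ball contributing at least $2$) yields at best $|A|\ge\tfrac23|C_2|$ and $|A|\ge\tfrac12(|C_2|+|C_4|)$, both of which are slack at the point above. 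So the final paragraph is not a proof step but a hope, and the hope is false for the inequalities you have actually written down.

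What the paper does instead is a tighter local-to-global link that your set $A$ never sees. They introduce $q=|\{v\in C_1:N(v)\cap C_2=\emptyset\}|$ and $\ell=|E(G-C_1-C_2)|$, and for each $a\in C_4$ they show that the number of $C_2$-vertices in $B_2(a)$ plus the number of $L$-edges and $Q$-vertices in $B_2(a)$ is always at least~$3$. Summing over disjoint balls gives $3|C_4|\le \ell+q+|C_2|$. A single edge-count across the cut $(C_1\cup C_2,\,\overline{C_1\cup C_2})$ yields the identity $2(\ell+q+|C_2|)=3n-4s$ with $s=|C_1|+|C_2|$, whence $|C_4|\le(3n-4s)/6$ and $|C_1\cup C_2\cup C_4|\le(3n+2s)/6$. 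The same edge-count also gives $s\le n/2+c_1(G)/3<0.652n$, and plugging in closes exactly to $0.7174n$. The crucial difference from your approach is that the local analysis around $C_4$ trades off against $|C_2|$ \emph{and} the global edge quantity $\ell+q$ simultaneously, rather than against the residual set $A$; that extra degree of coupling is what makes the arithmetic land on $0.7174$.
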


{\bf Proof.} Let $G$ satisfy the conditions of the lemma, and let
$C_1$, $C_2$ and $C_4$ be disjoint  subsets of $V(G)$ such that $C_i$ is $i$-independent for $i\in \{1,2,4\}$ and
$|C_1 \cup C_2 \cup C_4|=c_{1,2,4}(G)$. 

Since  $C_2$ is $2$-independent, each vertex in $C_1$ has at most one neighbor in $C_2$.
Let $Q$ be the set of vertices 
  in $C_1$ 
that do not have neighbors in $C_2$, and $q=|Q|$.
Let $L$ be the set of edges in $G-C_1-C_2$ and
$\ell=|L|$. For brevity, the vertices in $Q$ will be called $Q$-{\em vertices}, and the 
 edges in $L$ will be called
$L$-{\em edges}. Let $s=|C_1|+|C_2|$.

We will prove the lemma in a series of claims. Our first claim is:

\begin{equation}\label{s4}
s < 0.652 n.
\end{equation}

To show~\eqref{s4}, we  count the edges connecting $C_1\cup C_2$ with $\overline{C_1\cup C_2}$ in two ways: 
\begin{equation}\label{edge}
3(n -s) - 2\ell = e[C_1\cup C_2, \overline{C_1\cup C_2}] = 3s - 2(|C_1|-q).
\end{equation}
Solving for $s$, we get $s=\frac{n}{2}-\frac{1}{3}(\ell-|C_1|+q)$. Since $q,\ell\geq 0$ and $|C_1|\leq c_1$, this
together with~\eqref{c1}
yields
$$s\leq \frac{n}{2}-\frac{1}{3}(0-|C_1|+0)\leq \frac{n}{2}+\frac{c_1}{3}<0.652 n,$$
as claimed.
\qed

\medskip



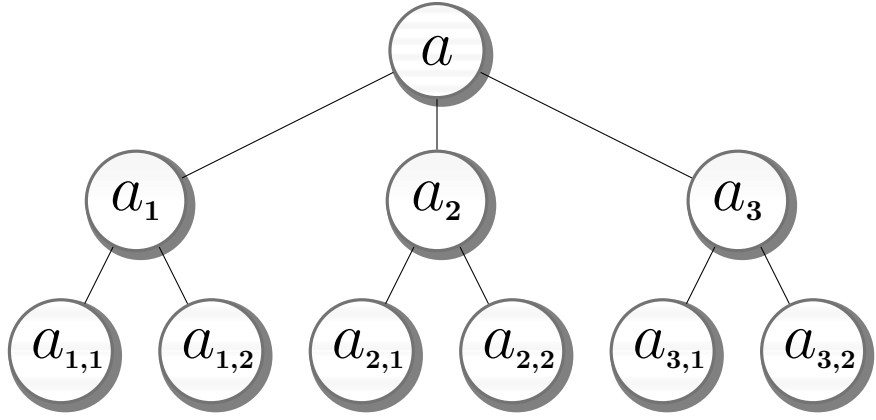
\begin{figure}[ht]
\begin{center}


      \tikzstyle{green_circle} = [  font={\huge\bfseries}, shape=circle, minimum size=0.5cm, circular drop shadow, text=black, very thick, draw=black!55, top color=white,bottom color=green!80, text width=0.5cm, align=center]
    \usetikzlibrary{shadows}
\begin{tikzpicture}[rotate=270]

\node [ font={\huge\bfseries}, shape=circle, minimum size=0.4cm, circular drop shadow, text=black, very thick, draw=black!55, top color=white,bottom color=green!0, text width=0.8cm, align=center] (v1) at (0,0) {$a$};
\node [ font={\huge\bfseries}, shape=circle, minimum size=0.4cm, circular drop shadow, text=black, very thick, draw=black!55, top color=white,bottom color=green!0, text width=0.8cm, align=center] (v2) at (2,4) {$a_{\text{\small{3}}}$};
\node [ font={\huge\bfseries}, shape=circle, minimum size=0.4cm, circular drop shadow, text=black, very thick, draw=black!55, top color=white,bottom color=green!0, text width=0.8cm, align=center] (v3) at (2,0) {$a_{\text{\small{2}}}$};
\node [ font={\huge\bfseries}, shape=circle, minimum size=0.4cm, circular drop shadow, text=black, very thick, draw=black!55, top color=white,bottom color=green!0, text width=0.8cm, align=center] (v4) at (2,-4) {$a_{\text{\small{1}}}$};
\node [ font={\huge\bfseries}, shape=circle, minimum size=0.8cm, circular drop shadow, text=black, very thick, draw=black!55, top color=white,bottom color=green!0, text width=0.8cm, align=center] (v5) at (4,5) {$a_{\text{\footnotesize{3,2}}}$};
\node [ font={\huge\bfseries}, shape=circle, minimum size=0.8cm, circular drop shadow, text=black, very thick, draw=black!55, top color=white,bottom color=green!0, text width=0.8cm, align=center] (v6) at (4,3) {$a_{\text{\footnotesize{3,1}}}$};
\node [ font={\huge\bfseries}, shape=circle, minimum size=0.8cm, circular drop shadow, text=black, very thick, draw=black!55, top color=white,bottom color=green!0, text width=0.8cm, align=center] (v7) at (4,1) {$a_{\text{\footnotesize{2,2}}}$};
\node [ font={\huge\bfseries}, shape=circle, minimum size=0.8cm, circular drop shadow, text=black, very thick, draw=black!55, top color=white,bottom color=green!0, text width=0.8cm, align=center] (v8) at (4,-1) {$a_{\text{\footnotesize{2,1}}}$};
\node [ font={\huge\bfseries}, shape=circle, minimum size=0.8cm, circular drop shadow, text=black, very thick, draw=black!55, top color=white,bottom color=green!0, text width=0.8cm, align=center] (v9) at (4,-3) {$a_{\text{\footnotesize{1,2}}}$};
\node [ font={\huge\bfseries}, shape=circle, minimum size=0.8cm, circular drop shadow, text=black, very thick, draw=black!55, top color=white,bottom color=green!0, text width=0.8cm, align=center] (v10) at (4,-5) {$a_{\text{\footnotesize{1,1}}}$};
\draw  (v1) edge (v2);
\draw  (v1) edge (v3);
\draw  (v1) edge (v4);
\draw  (v2) edge (v5);
\draw  (v2) edge (v6);
\draw  (v3) edge (v7);
\draw  (v3) edge (v8);
\draw  (v4) edge (v9);
\draw  (v10) edge (v4);
\end{tikzpicture}


\caption{A $(3,2,a)${-tree}   $ T_a$.}\label{fi2}
\end{center}
\end{figure}

Since $g(G)\geq 9$, for every $a\in V(G)$, the ball $B_G(a,2)$ induces a $(3,2,a)$-tree $T_a$.
When handling such a tree $T_a$, we will use the following notation (see Fig~\ref{fi2}):
 $$V(T_a)=\{a\}\cup N_1(a)\cup N_2(a),\, \mbox{ where}\, N_1(a)=\{a_1,a_2,a_3\},\,N_2(a)= \{ a_{1,1}, 
a_{1,2},a_{2,1},a_{2,2},a_{3,1},a_{3,2}\},$$ and 
$$E(T)=\{aa_1,aa_2,aa_3,a_1a_{1,1},a_1a_{1,2},a_2a_{2,1},a_2a_{2,2}, a_3a_{3,1},a_3a_{3,2}\}.$$


\noindent
For $j\in \{0,1,2\}$, let
$$S_j=\{a\in C_4:\, \mbox{ the total number  of  $L$-edges and  $Q$-vertices in $T_a$ is } j\},$$
and let $U=C_4-\bigcup_{j=0}^2 S_j$.

\medskip
Our next claim is:
\begin{equation}\label{3edges}
\parbox{5.5in}{\em 
 For each $0\leq j\leq 2$ and every $a\in S_j$, 
$|V(T_a)\cap C_2|\geq 3-j$.
}
\end{equation}

Indeed, let  $0\leq j\leq 2$ and $a \in S_j$. 
If a vertex $a_i\in N_1(a)$ is not in $ (C_1 \cup C_2)-Q$, then either $a_i\in Q$ or $aa_i\in L$.
Thus, by the definition of $S_j$, $|N_1(a)\cap ((C_1 \cup C_2)-Q)|\geq 3-j$.
 Since  each  $a_i\in (C_1 \cup C_2)-Q$ either is in $C_2$ or has a neighbor in $C_2\cap \{a_{i,1},a_{i,2}\}$,
 we get at least $3-j$ vertices  in $C_2\cap V(T_a)$. This proves~\eqref{3edges}.

For $0\leq j\leq 2$,  let $|S_j|=\alpha_j n$, and let  $|U|=\beta n$. Then

\begin{equation}\label{c4}
(\alpha_1+\alpha_2+\alpha_3+\beta)n=|C_4|.
\end{equation}

By the definition of $4$-independent sets,  for all $a\in C_4$  the balls $B_G(a,2)$ are disjoint  and not adjacent to each other.
For $0\leq j\leq 2$ and every  $a\in S_j$, the tree $T_a$
  contributes $j$ to $\ell+q$, and for every $a\in U$,  $T_a$ contributes at least $3$ to $\ell+q$. Therefore
\begin{equation}\label{ellt}
\alpha_1n+2\alpha_2 n+3\beta n\le \ell+q.
\end{equation}
Also,~\eqref{3edges}  yields a lower bound on $|C_2|$:
\begin{equation}\label{lbc2}
3\alpha_0 n+2 \alpha_1n+\alpha_2n\le |C_2| .
\end{equation}

\noindent
Now~\eqref{c4}, \eqref{ellt}, and  \eqref{lbc2} yield
\begin{equation}\label{0318}
3|C_4|=(\alpha_1n+2\alpha_2 n+3\beta n)+(3\alpha_0 n+2 \alpha_1n+\alpha_2n)\leq \ell+q+|C_2|.
\end{equation}
On the other hand,  by~\eqref{edge}
$$2(\ell+q)=3n-6s+2|C_1|=3n-4s-2|C_2|,$$
 so
$2(\ell+q+|C_2|)=3n-4s$. Comparing with~\eqref{0318}, we get
$$|C_4|\leq \frac{3n-4s}{6}=\frac{3n+2s}{6}-s.$$
Hence by the definition of $s$ and~\eqref{s4},
$$|C_1 \cup C_2 \cup C_4|=|C_4|+s\leq \frac{3n+2s}{6}\leq \frac{n}{2}+\frac{0.652n}{3}\leq 0.7174n.\qed
$$

\section{Proof of Theorem 1}
Let $J:=\{3,5,6,7,\ldots,11\}$ and 
\begin{equation}\label{Bg}
\mathcal{B}_g(n)     =     \left\{ G \in \mathcal{G}_g(n) \, :\; c_{1,2,4}(G)+ \sum_{j\in J}c_j(G) >0.9785n\;\mbox{ or }\;
\sum_{j=6}^{\lceil k/2\rceil - 1  }c_{2j+1}>\frac{2 \cdot 0.45537n}{127}.
\right\}.
\end{equation}

\begin{lemma}\label{conclusion}
 Let $k \geq 12$ be a fixed  integer and $g \geq 2k+2$.  For every $\epsilon > 0$, there exists an $N=N(\epsilon) > 0$ such that for each $n > N$, 
\begin{equation}\label{1,2,4,3,5}
|\mathcal{B}_g(n)| <\epsilon \cdot |\mathcal{G}_g(n)|.
\end{equation}
\end{lemma}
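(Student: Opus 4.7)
The plan is to realize $\mathcal{B}_g(n)$ as a finite union of ``bad events,'' each of which has been shown in the previous sections to have vanishing density in $\mathcal{G}_g(n)$, and then apply a union bound. Since $k\geq 12$, the girth hypothesis $g\geq 2k+2\geq 26$ is strong enough to invoke Lemma~\ref{bigl} (needs $g\ge 9$), Corollary~\ref{coreven} (needs $g\ge 22$), Corollary~\ref{corodd} (needs $g\ge 24$), and Lemma~\ref{trivial bounds}(ii) for each $j\le \lceil k/2\rceil-1$ (needs $g\ge 2j+2$).

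For the first clause of~\eqref{Bg}, I would first discard the event $E_0:=\{G\in\mathcal{G}_g(n):c_1(G)\ge 0.456n\}$, which has density $o(1)$ by Theorem~\ref{MK1} and Corollary~\ref{MK2} (since $0.45537<0.456$). Outside $E_0$, Lemma~\ref{bigl} yields $c_{1,2,4}(G)\le 0.7174n$. Next, for each $j\in J=\{3,5,6,7,8,9,10,11\}$ discard the event $E_j:=\{G:c_j(G)>b_j n\}$, of density $o(1)$ by Corollary~\ref{coreven} (for even $j$) or Corollary~\ref{corodd} (for odd $j$). Outside $E_0\cup\bigcup_{j\in J}E_j$, summing the prescribed $b_j$ values gives
\begin{equation*}
\sum_{j\in J}b_j=0.1394+0.05+0.03+0.0182+0.011+0.0063+0.004+0.0022=0.2611,
\end{equation*}
so $c_{1,2,4}(G)+\sum_{j\in J}c_j(G)\le 0.7174n+0.2611n=0.9785n$, matching the first threshold in~\eqref{Bg} exactly.

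For the second clause, I would, for each $j$ with $6\le j\le\lceil k/2\rceil-1$, discard the event $F_j:=\{G:c_{2j+1}(G)>0.45537n/(2^{j+1}-1)\}$, whose density is $o(1)$ by Lemma~\ref{trivial bounds}(ii). On the complement of $\bigcup_{j}F_j$, the elementary estimate $\tfrac{1}{2^{j+1}-1}\le\tfrac{128}{127}\cdot\tfrac{1}{2^{j+1}}$ valid for $j\ge 6$ yields
\begin{equation*}
\sum_{j=6}^{\lceil k/2\rceil-1}\frac{1}{2^{j+1}-1}\le\frac{128}{127}\sum_{j=6}^{\infty}\frac{1}{2^{j+1}}=\frac{128}{127}\cdot\frac{1}{64}=\frac{2}{127},
\end{equation*}
and therefore $\sum_{j=6}^{\lceil k/2\rceil-1}c_{2j+1}(G)\le \frac{2\cdot 0.45537 n}{127}$, matching the second threshold exactly.

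Finally, the total number of bad events $E_0$, $\{E_j\}_{j\in J}$, $\{F_j\}_{j=6}^{\lceil k/2\rceil-1}$ is bounded by $1+|J|+\lceil k/2\rceil-6$, a constant depending only on the fixed $k$. Choosing each individual density bound less than $\epsilon$ divided by this constant, a union bound then gives $|\mathcal{B}_g(n)|<\epsilon\,|\mathcal{G}_g(n)|$ for all $n>N(\epsilon)$. There is no genuine mathematical obstacle here: the lemma is an accounting step that repackages the bounds from Sections~2--4 into a single inequality to be fed into the proof of Theorem~\ref{e1}. The only care needed is numerical: the constants $b_j$ together with the $0.7174n$ bound have been calibrated so that $0.7174+\sum_{j\in J}b_j$ exactly equals $0.9785$, and the geometric-series tail uses essentially all the slack in the inequality $\sum_{j\ge 6}1/(2^{j+1}-1)\le 2/127$.
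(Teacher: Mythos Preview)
Your proposal is correct and follows essentially the same union-bound approach as the paper. The only cosmetic difference is in the treatment of the second clause of~\eqref{Bg}: the paper discards a \emph{single} bad event $\mathcal{B}''_g(n)=\{c_1(G)>0.45537n\}$ and then uses the deterministic inequality $c_{2j+1}(G)\le c_1(G)/(2^{j+1}-1)$ (implicit in the proof of Lemma~\ref{trivial bounds}(ii)) to bound the whole sum at once, whereas you invoke the probabilistic statement of Lemma~\ref{trivial bounds}(ii) separately for each $j\in\{6,\dots,\lceil k/2\rceil-1\}$. This gives the paper a union of $10$ events independent of $k$, versus your $1+|J|+(\lceil k/2\rceil-6)$ events; since $k$ is fixed, both variants are equally valid.
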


\begin{proof}
Let $\epsilon > 0$ be given.
 By Lemma~\ref{bigl}, Theorem~\ref{MK1}, and Corollary~\ref{MK2}, there exists an $N_{1,2,4} > 0$ such that for each $n>N_{1,2,4}$,         
$$|\{ G \in \mathcal{G}_g(n)\, :\;c_{1,2,4}(G)>0.7174n\}| \ < \frac{\epsilon}{10}  \cdot |\mathcal{G}_g(n)|.$$
 Let $$M_{1,2,4}(n):=\{  G \in \mathcal{G}_g(n)\, :\;c_{1,2,4}(G)>0.7174n\}.$$ 
For each $j\in J$ and the constants
$b_j$ defined in Corollaries~\ref{coreven} and~\ref{corodd}, let 
$$M_{j}(n):=\{  G \in \mathcal{G}_g(n)\, :\;c_{j}(G)>b_jn\}.$$  
Let 
$$\mathcal{B}'_g(n)     =     \left\{ G \in \mathcal{G}_g(n)  :\; c_{1,2,4}(G)+ \sum_{j\in J}c_j(G) >0.9785n
\right\}$$
$$
\mbox{and }\quad
\mathcal{B}''_g(n)     =     \left\{ G \in \mathcal{G}_g(n) :
c_1(G)>0.45537n
\right\}.$$
Since $b_{1,2,4}n+\sum_{j\in J} b_{j}n=0.9785n$ and 
$c_{1,2,4}+ \sum_{j\in J}c_j>0.9785n,$ 
if $G \in \mathcal{B}'_g(n)$, then
$$G \in M_{1,2,4}(n) \cup\bigcup_{j\in J} M_j(n).$$ 
 Corollaries~\ref{coreven} and~\ref{corodd} imply that for each $j\in J$, there exists an $N_j > 0$ such that for each $n > N_j$, 
$$|\{ G \in \mathcal{G}_g(n) \, :\; c_j(G)>b_jn\}| \ < \frac{\epsilon}{10} \cdot |\mathcal{G}_g(n)|.$$
By Theorem~\ref{MK1}, there exists an $N_1> 0$ such that for each $n > N_1$, 
$| \mathcal{B}''_g(n)| \ < \frac{\epsilon}{10} \cdot |\mathcal{G}_g(n)|.$

Let $N=\max\{N_{1,2,4}, N_1,N_3,N_5,N_6,\ldots ,N_{11}\}$. 
By the definition of $N$,  for each $n > N$,
\begin{equation}\label{0325}
 |\mathcal{B}'_g(n)| + |\mathcal{B}''_g(n)| < (1+|J|+1)\frac{\epsilon}{10} \cdot |\mathcal{G}_g(n)|=\epsilon \cdot |\mathcal{G}_g(n)|.
 \end{equation}


\noindent
Every graph $G\in \mathcal{G}_g(n)\setminus \mathcal{B}''_g(n)$ satisfies $c_1(G)\leq 0.45537n$.
Hence by Lemma \ref{trivial bounds}(ii), such a graph satisfies
$$\sum_{j=6}^{\lceil k/2\rceil - 1 }c_{2j+1} < \sum\limits_{k=6}^{\infty} \frac{0.45537n}{2^{k+1}-1} \leq  \frac{0.45537n}{127} \cdot \sum\limits_{k=0}^{\infty} \frac{1}{2^k}=\frac{2 \cdot 0.45537n}{127}.$$
It follows that $\mathcal{B}_g(n)\subseteq \mathcal{B}'_g(n)\cup \mathcal{B}''_g(n)$. Thus~\eqref{0325} implies~\eqref{1,2,4,3,5}.
\end{proof}

Now we are prepared to prove our main result.
\begin{proof}[{\bf Proof of Theorem 1.}] Let $k\geq 12$ be a   fixed  integer  and $g \geq 2k+2$.
 We need to show that for every $\epsilon > 0$, there exists an $N > 0$ such that for each $n > N$,
\begin{equation}\label{03251}
|\{ G \in\ \mathcal{G}_g(n) \, :\; \chi_p(G)\ \leq\ k\}| < \epsilon \cdot |\mathcal{G}_g(n)|.
 \end{equation}

 Let $\epsilon > 0$ be given and $G\in\ \mathcal{G}_g(n)$ satisfy $\chi_p(G) \leq k$. Then there is a partition of $V(G)$ into $C_1,C_2,\ldots ,C_k$ such that for each $i=1,2,\ldots ,k$, $C_i$ is $i$-independent. In particular,   $|C_1|+|C_2|+\ldots +|C_k|=n.$
By Lemma \ref{trivial bounds}(i),
\begin{equation}\label{12,14}
\sum_{j=6}^{\lfloor k/2\rfloor } |C_{2j}|
 < \sum\limits_{k=6}^{\infty} \frac{n}{3 \cdot 2^k-2} < \frac{n}{190} \cdot \sum\limits_{k=0}^{\infty} \frac{1}{2^k}=\frac{n}{95}.
\end{equation}
Since $n-\frac{n}{95}>0.9785n+\frac{2 \cdot 0.45537n}{127}$, this implies that $G\in\mathcal{B}_g(n)$, where $\mathcal{B}_g(n)$
is defined by~\eqref{Bg}. Thus, Lemma~\ref{conclusion} implies~\eqref{03251}.
\end{proof}

\bigskip
\noindent
{\bf Remark.} It seems that with a bit more sophisticated calculations, one can prove the claim of Theorem~\ref{e1} not only for almost all
cubic graphs with girth at least $2k+2$, but for almost all cubic $n$-vertex graphs.

			
\end{document}